\documentclass[12pt]{article}

\usepackage{amsmath,amsfonts,amssymb,amsthm}

\title{On the classification of positions and of complex structures in Banach spaces}





\author{Razvan Anisca\footnote{ R. Anisca was supported in part by NSERC Grant 312594-10}, Valentin Ferenczi\footnote{V. Ferenczi acknowledges the sup\-port of FAPESP pro\-ject 2013/11390-4 and project 2015/17216-1}\ \ and Yolanda Moreno\footnote{Y. Moreno has been supported in part by project MTM2010-20190-C02-01 and the program Junta de Extremadura GR15152 IV Plan Regional I+D+i, Ayudas a Grupos de Investigaci\'on.}}

\newcommand\address{\it {\noindent\leavevmode%
R. Anisca: Department of Mathematical Sciences, Lakehead University, Thunder Bay, ON, P7B 5E1, Canada \\
\\
V. Ferenczi: Departamento de Matem\'atica, Instituto de Matem\'atica e Estat\'istica 
Universidade de S\~ao Paulo, 05311-970 S\~ao Paulo, SP, Brazil \\ \\
Y. Moreno: Departamento de Matem\'aticas, Escuela Politecnica \\
Universidad de Extremadura, Avda. Universidad s/n, 10003 Cáceres, Spain}}

\date{}
\newtheorem{fact}{Fact}[section]
\newtheorem{thm}[fact]{Theorem}
\newtheorem{pr}[fact]{Proposition}

\newtheorem{cor}[fact]{Corollary}
\newtheorem{de}[fact]{Definition}
\newtheorem{lem}[fact]{Lemma}
\newtheorem{quest}[fact]{Question}

\newcommand{\R}{\mathbb{R}}
\newcommand{\C}{\mathbb{C}}
\newcommand{\N}{\mathbb{N}}

\newcommand{\Q}{\mathbb{Q}}

\begin{document}

\maketitle

\begin{abstract} A topological setting is defined to study the complexities of the relation of  equivalence of embeddings (or "position") of a Banach space into another and of the relation of isomorphism of complex structures on a real Banach space. The following results are obtained: a)  if $X$  is not uniformly finitely extensible, then there exists a space $Y$ for which the relation of position of $Y$ inside $X$ reduces the relation $E_0$ and therefore is not smooth; b) the relation of position of $\ell_p$ inside $\ell_p$, or inside $L_p$, $p \neq 2$, reduces the relation $E_1$ and therefore is not reducible to an orbit relation induced by the action of a Polish group;
c) the relation of position of a space inside another can attain the maximum complexity $E_{{\rm max}}$; d) there exists a subspace of $L_p, 1 \leq p <2$, on which isomorphism between complex structures reduces $E_1$ and therefore is not reducible to an orbit relation induced by the action of a Polish group.
\end{abstract}

\noindent{\em Keywords: Positions of Banach spaces; Automorphic space; Complex structures; Borel reducibility.}

\

\noindent{\em 2010 Mathematics Subject Classification}. Primary 46B03, 03E15.

\section{Introduction}
In this paper we are interested in defining a topological setting to compute the complexity of certain natural equivalence relations appearing in the theory of positions and/or complex structures. Our objective is to provide examples towards the idea that these relations are not "well classifiable", or in other words, to obtain high lower bounds of complexity for natural instance of these relations.  Our starting points are the previous results in which a continuum of equivalence classes were already obtained, without information on the complexity of the associated equivalence relation: examples of spaces with a continuum of mutually non isomorphic complex structures  \cite{A}, or examples of classical spaces with continuum many different positions inside another, see \cite{castplich} and \cite{MP}.

\

In this introduction we recall some basics of the theories of positions of Banach spaces, of complex structures, as well as of classification of analytic equivalence relations on Polish spaces.
In section 2, after defining the appropriate topological setting, we obtain lower bounds for the complexity of position of a space inside another, in different cases. We prove that
if $X$  is not uniformly finitely extensible, then there exists a space $Y$ for which the relation of position of $Y$ inside $X$ reduces the relation $E_0$ and therefore is not smooth (Theorem  \ref{NonUfo}).  Through a result about complexity of positions inside $\ell_p$-sums of non uniformly extensible spaces (Proposition \ref{E1}), we extend this and prove that the relation of position of $\ell_p$ inside $\ell_p$, or inside $L_p$, $p \neq 2$, reduces the relation $E_1$ and therefore is not reducible to an orbit relation induced by the action of a Polish group, Theorem \ref{E1lp}. Then through the study of complemented positions we use the main result of \cite{FLR} to show that the complexity of positions may be $E_{\rm max}$, the maximum complexity of analytic equivalence relations, Proposition \ref{Emax}. We end the section by providing the appropriate topological setting to study complex structures. In section 3, we describe an example to prove that there exists a subspace of $L_p, 1 \leq p <2$, on which isomorphism between complex structures reduces $E_1$ and therefore is not reducible to an orbit relation induced by the action of a Polish group.

\

\subsection{Positions of Banach spaces}
The notion of relative positions of Banach spaces arose in \cite{CM} where the definition of {\it automorphic} space was first introduced in connection with a classical result of Lindenstrauss and Rosenthal \cite{lindtzaf1}: {\it $c_0$ has the property that every isomorphism
between two of its infinite codimensional subspaces can be extended to an automorphism of the
whole space}. A separable space with such a property is said to be automorphic, or in other words, all its subspaces are in the same "position". The following problem remains open.

\begin{quest} Are $c_0$ and $\ell_2$ the only separable Banach spaces with that property?
\end{quest}

The papers \cite{CM, MP, castplich, CFM} were devoted to the study of different aspects of the automorphic problem. In \cite{MP, castplich} in particular, it is provided a general theory of positions for subspaces of a Banach space, by defining equivalent embeddings. Namely, given two infinite codimensional embeddings $T, U : Y \to X$ between separable Banach spaces, we let $\sim$ be the equivalence relation: $T \sim U$ if and only if there exists an automorphism $A$ of $X$ such that $T = AU$.  A {\em position} of $Y$ in $X$ is an $\sim$-equivalence class on the set of infinite codimensional embeddings from $Y$ to $X$.

\

The notion of automorphy index $a(Y,X)$ was introduced in \cite{MP} and it measures how many different positions a space $Y$ admits in another Banach space $X$. The automorphy index of $X$ is defined as $a(X) = \sup_Y a(Y, X)$ and, of course, a Banach space is said to be automorphic if $a(X) = 1$.  In \cite{castplich} it is estimated the automorphy indices $a(Y,X)$ for classical Banach spaces. The authors obtain, among other results: $a(c_0,X) = \{0, 1, 2,\aleph_0\}$ for every separable Banach
space X; $a(Y, \ell_p) = c$ for all subspaces of $\ell_p$, $p\not = 2$, and $a(Y,L_p) = c$ for all subspaces of $L_p$,
$p > 2$ not isomorphic to $\ell_2$; while $a(\ell_2, L_p) = 1$; for $1<p<2$ one has $a(Y,L_p) = c$ for all nonstrongly
embedded subspaces of $L_p$; $a(Y,L_1) = c$ for all nonreflexive subspaces of $L_1$, while
$a(\ell_2,L_1) = c$; $a(Y,C[0, 1])  \in \{1, c\}$ for every separable Banach space $Y$.

\

So once we have defined a topological setting for embeddings of a space $Y$ into another space $X$ and for the relation of being in the same position, we shall prove that the complexity of this relation is high for some of the above examples. Here, this can be interpreted as measuring the difficulty, once two embeddings $T,U$ of $Y$ into $X$ are given, of determining whether there exists an automorphism $A$ proving that these embeddings correspond to the same position.

\

We shall need   the notion of {\em uniformly finitely extensible} space considered
in \cite{castplich}. A space
 is uniformly finitely extensible (or UFO) if there exists $\lambda \geq 1$ such that for every finite dimensional subspace $E \subset X$, each linear operator $t: E \rightarrow X$ may be extended to a linear operator $T: X \rightarrow X$ with $\|T\| \leq \lambda \|t\|$. In \cite{CFM} it was proved that the UFO property is equivalent to being {\em compactly extensible}, meaning that every compact operator from a subspace $Y$ of $X$ into $X$ may be extended to the whole space.
Note that ${\mathcal L}_\infty$-spaces satisfy this property.

According to \cite{castplich} every automorphic space is UFO, and conversely, any UFO space is either an ${\mathcal L}_\infty$-space or a weak type $2$ near-Hilbert space  with the Maurey projection property. It remains open whether the UFO property is equivalent to being either ${\mathcal L}_\infty$-space or Hilbert.

\subsection{Complex structures}

A second theory that we shall revisit from the point of view of "definable" equivalence relations is the one of {\it complex structures} on real Banach spaces.
A real Banach space $(X, \| \cdot \|)$ admits a {\it complex structure} if there exists a multiplication of the elements of $X$ by complex scalars which is compatible with the norm:
$$
\|\lambda x\|=|\lambda| \|x\|, \qquad \forall x\in X, \qquad \forall \lambda\in \mathbb{C},
$$
or compatible with an equivalent norm to $\|\cdot\|$.

The complex structures on a real Banach space $(X, \| \cdot \|)$ correspond (in a one-to-one manner) to the $\mathbb{R}$-linear isomorphisms $T$ on $X$ satisfying $T^2=-Id$: if there is a complex structure we can take $Tx=ix$; conversely, we can define $(a+ib)x=ax+bTx$ which is compatible with the norm
$$
|\|x|\|=\sup_{\theta \in [0, 2\pi]} \|(\cos \theta)x+(\sin \theta)Tx\|.
$$

The isomorphic theory of complex structure addresses questions of existence, uniqueness, or the possible structure of the set of complex structures up to isomorphism.

By employing probabilistic methods, S. Szarek constructed in \cite{Sz} the first example of an infinite dimensional real Banach space which does not admit a complex structure. Using similar methods, J. Bourgain \cite{Bou} exhibited an example of an infinite dimensional complex Banach space $X$ not isomorphic to its {\it complex conjugate} $\overline X$:  $\overline X$ has the same elements and norm as $X$, the same addition of vectors, while the multiplication by scalars is
given by $\lambda \odot x=\overline{\lambda}x$, for $\lambda \in {\bf C}$, $x\in X$. Since it is clear that $X$ and $\overline X$ are identical as real Banach spaces, Bourgain's construction provides an example of a real Banach space with at least two non-isomorphic complex structures.

The  work of V. Ferenczi \cite{F} shows that it is possible to construct, for all positive integers $n\ge 1$, explicit examples of infinite dimensional real Banach spaces which admit precisely $n$ complex structures, up to isomorphim. W. Cuellar Carrera \cite{CC} gave an example of a separable real Banach space with exactly infinite countably many complex structures, up to isomorphism, while R. Anisca \cite{A} constructed subspaces of $L_p$, for $1\le p <2$, with a continuum of complex structures, up to isomorphism.

\subsection{Theory of complexity of equivalence relations}

We recall the theory of classification of analytic equivalence relations on Polish spaces by Borel reducibility. This area of research originated from the works of H. Friedman and L. Stanley \cite{FS} and independently from the works of L.A. Harrington, A.S. Kechris and A. Louveau \cite{HKL}. It may be thought of as an extension of the notion of cardinality in terms of complexity, when one counts equivalence classes.

A topological space is Polish if it is separable and its topology may be generated by a complete metric. Its Borel subsets are those belonging to the smallest $\sigma$-algebra containing the open sets. An analytic subset is the continuous image of a Polish space, or equivalently, of a Borel subset of a Polish space.
If $R$ (respectively $S$) is an equivalence relation on a Polish space $E$ (respectively $F$), then it is said that $(E,R)$ is {\em Borel reducible} to $(F,S)$, $(E,R) \leq_B (F,S)$, if there exists a Borel map $f:E \rightarrow F$ such that $\forall x,y \in E, x R y \Leftrightarrow f(x) F f(y)$.
They are {\em Borel bireducible}, $(E,R) \sim_B (F,S)$, if both $(E,R) \leq_B (F,S)$ and
$(F,S) \leq_B (E,R)$ hold. The aim is then to compare analytic equivalence relations modulo $\sim_B$.

One may note that such a map $f: E \rightarrow F$ induces an injection of $E/R$ into $F/S$ and therefore there are at least as many $S$-classes in $F$ as $R$-classes in $E$ when $(E,R) \leq_B (F,S)$. However the requirement that $f$ is Borel will induce much finer topological regularities;  actually the theory of $\leq_B$-classification is really interesting when both relations have $2^\omega$ classes, and there is a huge variety of such relations which are not bireducible to each other.

We now list a few important equivalence relations on the $\leq_B$-scale.
After the relations with finitely or countably many classes, the simplest Borel equivalence relation is $(\R,=)$, equality between real numbers. Actually by a result of Silver \cite{silver}, any Borel equivalence relation admits at most countably many classes, or there is a Borel reduction of $(\R,=)$ to it.
The analytic equivalence relations which are Borel reducible to $(\R,=)$ are called {\em smooth}; these are the relations which admit the real numbers as complete invariants.

 An important equivalence relation is the relation $E_0$ of eventual agreement between sequences of $0$ and $1$'s: on $2^{\omega}$,
$$\alpha E_0 \beta \Leftrightarrow  \exists m \in \N: \forall n \geq m, \alpha(n)=\beta(n).$$
The relation $E_0$ is a Borel equivalence relation with continuum many classes and which, furthermore, is non-smooth. So $(\R,=) <_B E_0$.  In fact $E_0$ is the $\leq_B$ minimum non-smooth Borel equivalence relation \cite{HKL}. Therefore, the most natural criterium to prove that an analytic relation is non-smooth is to reduce $E_0$ to it.

Quite natural are the orbit equivalence relations induced by the continuous  action of a Polish group $H$ on a Polish space $X$: the relation $E_H$ is defined on $X$ by
$$x E_H y \Leftrightarrow \exists h \in H: y=hx,$$
and is easily seen to be analytic. The relation $E_0$ is one of them. For any Polish group $H$, it is possible to prove that there is
a relation which is maximum among all orbit relations induced by actions of $H$. There is also a maximum $E_G$ for orbit relations associated to the action of  Polish groups; in particular
$E_0 <_B E_G$.

In 1997, Kechris and Louveau \cite{KL} discover that there are analytic equivalence relations which are not reducible to any orbit equivalence relation, or in other words, to $E_G$. There is actually a minimal equivalence relation, called $E_1$, among those which are not Borel reducible to an orbit equivalence relation. It is defined as the eventual agreement between sequences of real numbers: for $x,y \in \R^\N$,
$$x E_1 y \Leftrightarrow \exists n \forall m \geq n\ x_m=y_m.$$
The relation $E_1$ is, up to now, the only known obstruction to  reducibility to an orbit equivalence relation: $E_1 \not\leq_B E_G$.

Finally, the complete analytic equivalence relation $E_{\rm max}$ is the most complex
of all analytic equivalence relations, and is strictly above $E_1$ and $E_G$. It may be defined formally as the $\leq_B$-maximum equivalence relation, and the proof of its existence uses certain universality properties of
analytic sets.
There  also exist explicit realizations of $E_{\rm max}$, the most important in our setting being the relation of linear isomorphism between separable Banach spaces \cite{FLR}.

\section{Classification of subspaces, positions and complex structures}

In what follows, the notation $\simeq$ will be used for equivalence relations associated to linear isomorphism of Banach spaces, while
$\sim$ will be used for relations of equivalence of positions of a space inside another.
The letters $T,U$ will usually stand for embeddings, $A,B$ for automorphisms, $P,Q$ for projections, $J,K$ for complex structures.

\

Since the definition of the order $\leq_B$ relies on the Borel property of the function realizing the reduction from a relation to another, the topologies of the Polish spaces considered only play a role through the Borel sets they generate. In the following we shall therefore prefer to talk about standard Borel spaces than Polish spaces: a {\em standard Borel space} is a set equipped with a $\sigma$-algebra which is the $\sigma$-algebra of Borel sets induced by some Polish topology on the set.

Let $X$ be a separable infinite dimensional Banach space.
There is a natural way to equip the set of infinite dimensional subspaces of  X with a Borel structure (see, e.g., \cite{Kechris}), and the relation we are more interested in, of linear isomorphism, is analytic in this setting \cite{Bo}. If we choose $X$ to be a universal space such as $C(2^\omega)$, then we obtain a description of the {\em standard Borel space $SB$} of all separable Banach spaces.
 What is proved in \cite{FLR} is that linear isomorphism $\simeq$ between elements of $SB$  is an analytic relation which is Borel bireducible to $E_{\rm max}$, or in other terms, has the maximum complexity among all analytic equivalence relations on Polish spaces. We may also restrict the relation $\simeq$ to $SB(X)$, the standard Borel space of infinite dimensional subspaces of $X$.

In \cite{FLR} some other relations are proved to have maximum complexity
$E_{\rm max}$: Lipschitz isomorphism or (complemented) biembeddability on $SB$, uniform homeomorphism of complete separable metric spaces, for example. According to \cite{LR}, isometric biembeddability also has complexity $E_{\rm max}$. On the other hand, linear isometry on $SB$ \cite{melleray} and homeomorphism of compact metric spaces \cite{Zi}, for example, are of complexity $E_G$.

\

Given $Y,X$ separable Banach spaces, we shall also need  to study analytic equivalence relations on ${\cal B}(Y,X)$, the set of bounded linear operators from $Y$ into $X$, or on some of its subsets. To do this we note that endowed with the strong operator topology, the space ${\cal B}(Y,X)_{\le 1}$ of linear operators with norm less than or equal to 1 is Polish, while ${\cal B}(Y,X)$ is a standard Borel space with respect to the Borel structure generated by the strong operator topology (as a countable union of standard Borel spaces).
This result may be found in \cite{Kechris} p80.

It will be useful to note that since multiplication of operators is continuous in the strong operator topology when restricted to $B \times {\cal B}(X) \longrightarrow {\cal B}(X)$, where $B$ is a norm bounded subset of ${\cal B}(X)$, the multiplication of operators ${\cal B}(X) \times {\cal B}(X) \longrightarrow {\cal B}(X)$ is Borel.

\subsection{Complemented subspaces}
In this part we aim to define a Borel standard space of {\em complemented} infinite dimensional subspaces of a given separable Banach space $X$. This is done as follows: since the multiplication of operators ${\cal B}(X) \times {\cal B}(X) \longrightarrow {\cal B}(X)$ is Borel, the set of projections on $X$ is a Borel subset of ${\cal B}(X)$.  Combined with the easy fact that the set of compact operators on $X$ is Borel as well, we deduce that the set of projections of infinite range is Borel in ${\cal B}(X)$ for the SOT topology.

\begin{de} Let $X$ be a separable infinite dimensional Banach space. We denote by
${\cal P}(X)$ the Borel standard space of projections of infinite range in ${\cal B}(X)$, equipped with the SOT topology.
\end{de}

\begin{de} Let $X$ be a separable infinite dimensional Banach space. The relation $\simeq$ defined on ${\cal P}(X)$ by
$$P \simeq Q \Leftrightarrow PX \simeq QX$$
is called the {\em relation of linear isomorphism between complemented subspaces of $X$}.
\end{de}

We may relate $({\mathcal P}(X),\simeq)$ to $(SB(X),\simeq)$ as follows:

\begin{pr} Let $X$ be a separable infinite dimensional Banach space. Then the map
from ${\cal P}(X)$ into $SB(X)$ defined by $P \mapsto PX$ is Borel.
In particular the relation $\simeq$ is analytic on ${\cal P}(X)$ and
$$({\cal P}(X),\simeq) \leq_B (SB(X),\simeq).$$
\end{pr}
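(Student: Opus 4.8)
The plan is to show that the map $\Phi: {\cal P}(X) \to SB(X)$ given by $P \mapsto PX$ is Borel, and then the remaining assertions follow formally: a composition of a Borel map with the analytic relation $\simeq$ on $SB(X)$ (which is analytic by \cite{Bo}) is again analytic, and $P \simeq Q$ holds in ${\cal P}(X)$ if and only if $\Phi(P) \simeq \Phi(Q)$ holds in $SB(X)$ by the very definition of $\simeq$ on ${\cal P}(X)$; hence $\Phi$ witnesses $({\cal P}(X),\simeq) \leq_B (SB(X),\simeq)$.

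To prove that $\Phi$ is Borel, I recall how the Borel structure on $SB(X)$ is generated (see \cite{Kechris}): fixing a dense sequence $(x_n)_{n \in \N}$ in $X$, the standard Borel structure on $SB(X)$ is the one induced by the maps $V \mapsto d(x_n, V)$, $n \in \N$; equivalently, it is generated by sets of the form $\{V : V \cap U \neq \emptyset\}$ for $U$ ranging over a countable basis of open sets of $X$. So it suffices to check that for each $n$ and each rational $r > 0$ the set $\{P \in {\cal P}(X) : d(x_n, PX) < r\}$ is Borel in the SOT topology on ${\cal P}(X)$. The key observation is that since $P$ is a projection, $d(x_n, PX) = d(x_n, PX)$ is computed by $\|x_n - Px_n\|$ is \emph{not} quite the distance, so instead I would use that $x_m \in PX$ is equivalent to $Px_m = x_m$, and more usefully that a point $y$ lies within $\varepsilon$ of $PX$ iff there is some $z$ with $\|y - Pz\| < \varepsilon$; running $z$ over the dense sequence $(x_k)$ this becomes $\inf_k \|x_n - P x_k\| < r$. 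Since for each fixed $k$ the evaluation $P \mapsto P x_k$ is SOT-continuous (indeed by definition of SOT) and hence $P \mapsto \|x_n - Px_k\|$ is continuous, the countable infimum over $k$ is Borel (in fact upper semicontinuous), so $\{P : d(x_n, PX) \leq r\} = \{P : \inf_k \|x_n - Px_k\| \leq r\}$ is Borel. This gives Borel measurability of all the coordinate maps $P \mapsto d(x_n, \Phi(P))$, hence $\Phi$ is Borel.

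For the ``in particular'' clause: $\simeq$ on $SB(X)$ is analytic as a subset of $SB(X)^2$, so its preimage under the Borel map $\Phi \times \Phi : {\cal P}(X)^2 \to SB(X)^2$ is analytic, and this preimage is exactly $\simeq$ on ${\cal P}(X)$ by Definition of $P \simeq Q$. Finally $\Phi$ is a Borel reduction of $({\cal P}(X),\simeq)$ to $(SB(X),\simeq)$: by construction $P \simeq Q \iff PX \simeq QX \iff \Phi(P) \simeq \Phi(Q)$.

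I expect the only real point requiring care is the verification that $\{P : d(x_n, PX) < r\}$ is Borel, i.e.\ correctly expressing the distance to the range of $P$ in terms of SOT-continuous data; once one notes that density of $(x_k)$ in $X$ lets one replace ``$d(x_n, PX)$'' by ``$\inf_k \|x_n - Px_k\|$'' this is routine. Everything else is a formal manipulation of the definitions of Borel reducibility and of the standard Borel structures involved.
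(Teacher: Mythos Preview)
Your proof is correct and follows essentially the same route as the paper. The paper verifies Borelness of $P\mapsto PX$ by checking that the preimage of each Effros generator ${\mathcal O}_U=\{Y:Y\cap U\neq\emptyset\}$ is Borel via the equivalence $PX\cap U\neq\emptyset \Leftrightarrow \exists n\ Px_n\in U$; you check instead that each coordinate $P\mapsto d(x_n,PX)=\inf_k\|x_n-Px_k\|$ is Borel. Both arguments rest on the same observation that $\{Px_k:k\in\N\}$ is dense in $PX$ (since $PX$ is closed and $P$ is continuous), and the two descriptions of the Effros Borel structure you mention are interchangeable, so the difference is purely cosmetic.
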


\begin{proof} Let ${\mathcal O}_U$ be a typical Borel set generating the Effros Borel structure of $SB(X)$, i.e.
${\mathcal O}_U=\{Y \subset X: Y \cap U \neq \emptyset\}$, where $U \neq \emptyset$ is open.
Then given $(x_n)_n$ a dense family in $X$, we note that
$$PX \in {\mathcal O}_U \Leftrightarrow PX \cap U \neq \emptyset
\Leftrightarrow \exists n \in \N Px_n \in U.$$
This last condition is Borel in ${\mathcal P}(X)$.
 \end{proof}

This means that the complexity of isomorphism between complemented subspaces of $X$ will be $\leq_B$ below the the complexity of isomorphism between subspaces of $X$, via the set $\{PX, P \in {\mathcal P}(X)\}$. The result of \cite{FLR} about maximum complexity of isomorphism between subspaces may be extended to complemented subspaces as follows:

\begin{pr}\label{u} The complexity of linear isomorphism between complemented subspaces of $U$ is $E_{\rm max}$, where $U$ is Pe\l czy\'nski universal unconditional space.
\end{pr}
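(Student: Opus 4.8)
By the previous proposition the Borel map $P\mapsto PU$ witnesses $({\cal P}(U),\simeq)\le_B(SB(U),\simeq)$; since $\simeq$ on $SB(U)$ is the restriction of the analytic relation $\simeq$ on $SB$ and $E_{\rm max}$ is $\le_B$-maximum among analytic equivalence relations, transitivity of $\le_B$ already gives $({\cal P}(U),\simeq)\le_B E_{\rm max}$. The content of the statement is thus the reverse reduction $E_{\rm max}\le_B({\cal P}(U),\simeq)$.

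To produce it the plan is to revisit the proof of the main result of \cite{FLR}. That proof provides, in a Borel way, for each element $\alpha$ of a standard Borel space carrying an $E_{\rm max}$-complete relation $R$ (for instance $R=$ complemented biembeddability on $SB$, which has complexity $E_{\rm max}$ by \cite{FLR}), a separable Banach space $X_\alpha$ with $\alpha\,R\,\beta\Leftrightarrow X_\alpha\simeq X_\beta$. Step one is to verify that this assignment can be arranged so that each $X_\alpha$ possesses an unconditional basis whose unconditional constant is bounded independently of $\alpha$ and which, viewed as a sequence in the universal space used to define $SB$, depends on $\alpha$ Borel-measurably; this should be visible from the explicit way in which the spaces of \cite{FLR} are assembled from the coding parameter. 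Step two is a parametrized form of Pe\l czy\'nski's theorem that $U$ is complementably universal for spaces with an unconditional basis: the classical proof takes a space with unconditional constant $\le C$ and, using its basis, realizes it isomorphically as the closed span of a suitable block family in $U$ which is moreover complemented, all the constants involved depending only on $C$. Carried out uniformly over the family $(X_\alpha)$, and using the uniform bound on the unconditional constants, this yields a Borel map $\alpha\mapsto Q_\alpha\in{\cal P}(U)$ with $Q_\alpha U\simeq X_\alpha$, i.e. a Borel reduction of $R$ to $({\cal P}(U),\simeq)$. Combined with the first paragraph, $({\cal P}(U),\simeq)\sim_B E_{\rm max}$.

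A shortcut would be available if one could read off \cite{FLR} a reduction $E_{\rm max}\le_B({\cal P}(Z),\simeq)$ for a single space $Z$ with an unconditional basis: since such a $Z$ is isomorphic to a complemented subspace of $U$, an elementary computation with the Borel multiplication of operators on $U$ gives $({\cal P}(Z),\simeq)\le_B({\cal P}(U),\simeq)$, and one concludes as above. In either route the Banach-space ingredients are classical; the genuine obstacle is the measurability bookkeeping, i.e. checking that the reduction of \cite{FLR} can be made to meet the extra requirements (uniform unconditionality and Borel dependence of the bases) that allow it to be fed measurably into the universality argument. That is the step at which a careful inspection of the construction of \cite{FLR} is needed.
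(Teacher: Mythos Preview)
Your upper bound is exactly the paper's, and your two proposed routes for the lower bound would both work; but the paper's argument is considerably shorter, because it exploits a sharper form of the \cite{FLR} result than you quote. What is actually proved in \cite{FLR} is that $E_{\rm max}$ is Borel reducible to isomorphism between closed linear spans of \emph{subsequences} of a fixed unconditional basis $(e_n)_n$ of some space $Z$. Since by Pe\l czy\'nski's theorem $(e_n)_n$ is equivalent to a subsequence of the basis $(u_n)_n$ of $U$, one may simply assume $Z=U$ and $(e_n)_n=(u_n)_n$. Then every subspace in the range of the reduction is $[u_n:n\in A]$ for some $A\in\N^\N$, hence complemented by the canonical band projection $P_A$; and the map $A\mapsto P_A$ from $\N^\N$ to ${\cal P}(U)$ is visibly Borel for the SOT. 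That is the whole proof.

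Compared with this, your Route~1 requires a Borel-parametrized version of Pe\l czy\'nski's embedding theorem, which is true but nontrivial to write out; your Route~2 avoids that, and is essentially what the paper does, except that you still phrase it in terms of arbitrary complemented subspaces of $Z$ and then need the auxiliary reduction $({\cal P}(Z),\simeq)\le_B({\cal P}(U),\simeq)$. The paper bypasses both complications by using that the \cite{FLR} reduction already lands in spans of subsequences of a single basis: this makes the transfer to $U$ a one-time application of universality, and the Borel map into ${\cal P}(U)$ is just $A\mapsto P_A$. So the ``measurability bookkeeping'' you flag as the genuine obstacle in fact evaporates once one reads off the precise form of the \cite{FLR} reduction.
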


\begin{proof} It is proved in \cite{FLR} that $E_{\rm max}$ is Borel reducible to isomorphism between
subspaces generated by subsequences of the unconditional basis of a certain space, which therefore we may assume to be $U$ \cite{P}. Noting that every subsequence of the basis $(u_n)_{n \in A}$ of $U$ is complemented by the natural projection $P_A$, it is enough to prove that the map
$${\N}^{\N} \mapsto {\cal P}(U)$$ taking an infinite set $A$ to $P_A$ is Borel.
This is clear since
$$P_A \in {\mathcal O}_{P,x_1,\ldots,x_n,\epsilon} \Leftrightarrow
\forall i=1,\ldots,n, \; \|Px_i-P_A x_i\|<\epsilon$$
which is an open condition in ${\N}^{\N}$.
\end{proof}

\subsection{Complexity of positions}

Given  infinite dimensional separable Banach spaces $Y, X$, we shall use the notation ${\rm Emb}(Y,X)$ for the set of linear operators which are infinite codimensional embeddings of $Y$ into $X$ (i.e. onto infinite codimensional subspaces of $X$). We also denote by $GL(X)$ the group of automorphisms on $X$. We let $\sim$ be the equivalence relation on
${\rm Emb}(Y,X)$ defined by
$$T \sim U \Leftrightarrow \exists A \in GL(X): T=AU.$$

\begin{de} Let $X, Y$ be infinite dimensional and separable. A {\em position} of $Y$ in $X$ is an $\sim$-equivalence class on ${\rm Emb}(Y,X)$.
\end{de}

By the {\em complexity} of the positions of $Y$ in $X$, we mean the complexity of the equivalence relation $\sim$ on ${\rm Emb}(Y,X)$ along the $\leq_B$-scale. For this to make sense, we just need to note the following:

\begin{pr} The space ${\rm Emb}(Y,X)$ is a Borel standard space and $\sim$ is an analytic relation on it.
\end{pr}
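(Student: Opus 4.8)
The plan is to exhibit ${\rm Emb}(Y,X)$ as a Borel subset of the standard Borel space ${\cal B}(Y,X)$ (equipped with the SOT-generated Borel structure discussed above) and then to check that the relation $\sim$, which is defined by an existential quantifier over the Polish group $GL(X)$, is analytic. First I would recall that ${\cal B}(Y,X)$ is a standard Borel space for the $\sigma$-algebra generated by the strong operator topology, written as a countable union over norm-bounded balls ${\cal B}(Y,X)_{\le n}$, each of which is Polish in the SOT. An operator $T\in{\cal B}(Y,X)$ belongs to ${\rm Emb}(Y,X)$ precisely when (i) $T$ is bounded below, i.e. $\exists c>0\,\forall n\ \|Tx_n\|\ge c\|x_n\|$ for a fixed dense sequence $(x_n)$ in $Y$, and (ii) the range $TY$ is infinite codimensional in $X$. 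Condition (i) is plainly Borel: for fixed rational $c$ the set $\{T:\forall n\ \|Tx_n\|\ge c\|x_n\|\}$ is closed in each ball for the SOT since $T\mapsto Tx_n$ is SOT-continuous, and one takes a countable union over $c$.

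The delicate point is the Borelness of condition (ii), infinite codimensionality of the range. Here I would use the map $T\mapsto \overline{TY}$ from ${\cal B}(Y,X)$ into the Effros Borel space $SB(X)$: exactly as in the proof of the proposition relating ${\cal P}(X)$ to $SB(X)$, one has $\overline{TY}\in{\cal O}_U\Leftrightarrow \exists n\ Tx_n\in U$, which is Borel, so $T\mapsto\overline{TY}$ is Borel. Now ``$Z$ is an infinite codimensional subspace of $X$'' is a Borel (indeed, one can check it is Borel by standard selector arguments: the set of finite codimensional closed subspaces of $X$ is Borel in $SB(X)$, as is the set of infinite dimensional ones, using the existence of Borel selectors for Effros structures) condition on $Z\in SB(X)$; composing with the Borel map $T\mapsto\overline{TY}$ shows (ii) defines a Borel subset of ${\cal B}(Y,X)$. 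Together with (i) this proves ${\rm Emb}(Y,X)$ is a Borel, hence standard Borel, subspace.

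For the analyticity of $\sim$, I would observe that $GL(X)$ is a Polish group: it is a $G_\delta$ subset of the Polish space $({\cal B}(X)_{\le 1}\times{\cal B}(X)_{\le 1},{\rm SOT})$ via the parametrization $A\mapsto(A/\|A\|,A^{-1}/\|A^{-1}\|)$, together with a countable union over the norm bounds, so it carries a standard Borel structure (this is the usual argument, and it is the formulation used implicitly in \cite{Kechris}). The relation $T\sim U$ is then
$$T\sim U\ \Longleftrightarrow\ \exists A\in GL(X):\ T=AU,$$
and since multiplication ${\cal B}(X)\times{\cal B}(Y,X)\to{\cal B}(Y,X)$ restricted to norm-bounded sets is SOT-continuous — hence Borel overall, as remarked in the excerpt for ${\cal B}(X)\times{\cal B}(X)$, and the same proof applies here — the set $\{(A,U,T): T=AU\}$ is Borel in $GL(X)\times{\rm Emb}(Y,X)^2$. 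Projecting out the $GL(X)$ coordinate (a continuous image of a Borel subset of a standard Borel space) yields that $\sim$ is analytic.

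The main obstacle I anticipate is the bookkeeping in condition (ii): one must confirm that infinite codimensionality is genuinely Borel in the Effros structure rather than merely analytic. I would handle this by recalling the standard fact (see \cite{Kechris}) that there is a sequence of Borel functions $d_k:SB(X)\to X$ such that $(d_k(Z))_k$ is dense in $Z$; then ``$\dim(X/Z)\le k$'' can be expressed by saying that $k+1$ suitably chosen points of $X$ are, modulo $\overline{{\rm span}}$ of finitely many $d_j(Z)$'s, linearly dependent up to arbitrarily small perturbation — a countable Boolean combination of Borel conditions — and ``$Z$ infinite codimensional'' is the complement of $\bigcup_k\{Z:\dim(X/Z)\le k\}$, hence Borel. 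Everything else is routine manipulation of SOT-continuity and countable unions over norm balls.
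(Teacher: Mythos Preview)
Your argument is essentially correct, and for the Borelness of ${\rm Emb}(Y,X)$ you actually give more detail than the paper, which simply declares this an ``easy exercise''. Your decomposition into (i) bounded below and (ii) infinite-codimensional range, together with the Effros map $T\mapsto\overline{TY}$ and the selector argument for finite codimension, is a perfectly good way to fill in that exercise.

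For the analyticity of $\sim$, however, your route differs from the paper's. The paper does \emph{not} invoke $GL(X)$ as a standard Borel space at all; instead it codes an automorphism $A$ by the sequence $(z_n)_n=(Ax_n)_n\in X^{\omega}$ of its values on a fixed dense sequence, and writes down explicitly a Borel set $B\subset X^{\omega}\times{\rm Emb}(Y,X)^2$ such that $T\sim U$ iff $\exists (z_n)_n\,((z_n)_n,T,U)\in B$. The Borel conditions are: $(z_n)_n$ is dense, the map $x_n\mapsto z_n$ is bi-Lipschitz with some integer constant $K$, and the relation $T=AU$ is captured by $\|Uy_m-x_n\|\le\epsilon\Rightarrow\|Ty_m-z_n\|\le K\epsilon$. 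This has the advantage of being completely self-contained and of sidestepping any discussion of whether $GL(X)$ is Polish---a point on which, incidentally, the paper later takes the opposite position from you (it states that ``$GL(X)$ is not a Polish group''). Your argument only needs $GL(X)$ to be standard Borel, which is true and easy (it is Borel in ${\cal B}(X)$: bounded below plus dense range are both countable Boolean conditions over a dense sequence), but your stated justification via the non-injective map $A\mapsto(A/\|A\|,A^{-1}/\|A^{-1}\|)$ and ``$G_\delta$ together with a countable union'' is garbled and should be replaced by that direct verification. With that fix, both approaches work; the paper's is more explicit, yours more conceptual.
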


\begin{proof}
It is an easy exercise to check that the space ${\rm Emb}(Y,X)$ is a Borel subset of ${\mathcal B}(Y,X)$ (recalling that these sets are equipped with the SOT), and therefore a Borel standard space in its own right. Fix $(y_n)_n$ and $(x_n)_n$  dense in $Y$ and $X$ respectively. 
We let $B \subset X^{\omega} \times {\rm Emb}(Y,X)^2$ be defined by
$$((z_n)_n,T,U) \in B \Leftrightarrow$$ $${\rm the\ map\ } x_n \mapsto z_n {\rm\ extends\ to\ an\ isomorphism\ } A{\rm \ on\ } X {\rm\ satisfying\ } T=AU.$$
We claim  that $B$ is Borel, and therefore $\sim$ is analytic by
$$T \sim U \Leftrightarrow \exists (z_n)_n \in X^\omega\  ((z_n)_n,T,U) \in B.$$
That $B$ is Borel follows from 
$$((z_n)_n,T,U) \in B \Leftrightarrow
\forall n \in \N, \forall \epsilon \in \Q^{+*}, \exists m \in \N\ 
\|x_n - z_m\| \leq \epsilon$$
$$\wedge\ \exists K \in \N \big(\forall (\lambda_i)_i \in c_{00}(\Q),
K^{-1} \|\sum_i \lambda_i x_i\| \leq \|\sum_i \lambda_i z_i\|
\leq K \|\sum_i \lambda_i x_i\| $$
$$\wedge\ \forall m,n \in \N, \forall \epsilon \in \Q^{+*},
 \|Uy_m-x_n\| \leq \epsilon \Rightarrow \|Ty_m-z_n\| \leq K\epsilon\big).$$
\end{proof}

We now turn to the notion of uniformly finitely extensible (or UFO) space recalled in the introduction.  Since every automorphic space has this property,  non-UFO spaces admit subspaces in at least two positions. We shall now extend this to prove that the relation of position is not even smooth in these instances. Recall that UFO spaces are either ${\mathcal L}_\infty$-spaces or near Hilbert, meaning that non-UFO spaces include most of the classical spaces.

\

Given $n \in \N$, we write for $\alpha,\beta \in 2^\omega$, $\alpha E_0^n \beta$ to mean that
$\alpha_i=\beta_i$ for all $i \geq n$. We also define for two embeddings
of $Y$ into $X$, $U \sim_n V$ to mean that there exists an automorphism
$T$ of $X$ with $TU=V$, such that ${\rm max}\{\|T\|, \|T^{-1}\|\} \leq n$.

\begin{thm}\label{NonUfo}
If $X$ is a separable, infinite dimensional, non uniformly finitely extensible space, then there is some subspace $Y$ of $X$
such that the relation $E_0$ is Borel reducible to $({\rm Emb}(Y,X),\sim)$. In particular the positions of $Y$ in $X$ are not smooth. \end{thm}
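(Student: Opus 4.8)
The idea is to build a single subspace $Y=\overline{\bigoplus_k E_k}$ of $X$ carrying a finite-dimensional decomposition and, for each $\alpha\in 2^{\omega}$, an embedding $T_\alpha\in{\rm Emb}(Y,X)$ which coincides with the inclusion $Y\hookrightarrow X$ except for a ``twist'' on each block $E_k$ with $\alpha_k=1$. The blocks are chosen, using that $X$ is not UFO, so that a single twist is arbitrarily expensive to undo by an automorphism, whereas only finitely many twists can always be undone (at some finite cost). This yields $\alpha E_0\beta\Leftrightarrow T_\alpha\sim T_\beta$ with $\alpha\mapsto T_\alpha$ continuous, hence a Borel reduction of $E_0$. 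The first preparatory step is a localized form of non-UFO: \emph{if $X$ is not UFO then for every finite-codimensional subspace $V\subseteq X$ and every $\lambda\geq 1$ there are a finite-dimensional $E\subseteq V$ and an operator $t:E\to X$ admitting no extension $\tilde t:X\to X$ with $\|\tilde t\|\leq\lambda\|t\|$.} Indeed, if for some finite-codimensional $V$ of codimension $d$ every operator defined on a subspace of $V$ extended with blow-up $\leq\lambda$, then an arbitrary $t:E_0\to X$ could be split as $\tilde s|_{E_0}+(t-\tilde s|_{E_0})$, where $\tilde s$ extends $t|_{E_0\cap V}$ with norm $\leq\lambda\|t\|$ and the remainder factors through the $\leq d$-dimensional quotient $E_0/(E_0\cap V)$, hence extends with norm at most a constant depending only on $d$ times $\|t\|$ (using that $\leq d$-dimensional spaces have projection constant $O(\sqrt d\,)$); this would make $X$ UFO.

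\textbf{Construction.}
Using this lemma I would build inductively a basic sequence of finite-dimensional blocks $(E_k)_k$ in $X$, with FDD constant $\leq K$, so that (i) $Y:=\overline{\bigoplus_k E_k}$ has infinite codimension in $X$ --- e.g. by also reserving, at each step, a norming vector left out of all blocks, so that the closed span of the reserved vectors injects into $X/Y$ --- and (ii) each $E_k$ carries an operator $u_k:E_k\to X$ with ${\rm ext}(u_k):=\inf\{\|\tilde u_k\|:\tilde u_k|_{E_k}=u_k\}=c_k$, where $c_k\to\infty$, while $\|u_k\|\leq\delta_k$ with $\sum_k\delta_k\|P^Y_k\|<\tfrac12$ (here $P^Y_k$ is the $k$-th FDD projection): at the $k$-th step one applies the localization lemma inside the finite-codimensional subspace used to continue the decomposition, with $\lambda$ large, and then rescales the witness so that its extension constant is exactly $c_k$ and its norm at most $\delta_k$. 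For $\alpha\in 2^{\omega}$ put
$$T_\alpha y = y + \sum_{k:\,\alpha_k=1} u_k\bigl(P^Y_k y\bigr),\qquad y\in Y.$$
The series converges absolutely and $\|T_\alpha y\|\geq\tfrac12\|y\|$, so $T_\alpha$ is an isomorphic embedding; $T_\alpha-\iota_Y$ is compact, and since lower semi-Fredholm operators are stable under compact perturbations while $\iota_Y$ is not lower semi-Fredholm, $T_\alpha(Y)$ has infinite codimension, i.e. $T_\alpha\in{\rm Emb}(Y,X)$. Finally $\alpha\mapsto T_\alpha$ is continuous for the strong operator topology (for fixed $y$ the tail $\sum_{k>N}\|u_k P^Y_k y\|$ tends to $0$), hence Borel.

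\textbf{The two directions.}
Suppose first $\alpha E_0^m\beta$, so $T_\alpha-T_\beta=\sum_{k<m}\sigma_k u_k P^Y_k$ is finite rank ($\sigma_k\in\{-1,1\}$). Then $B:=T_\alpha T_\beta^{-1}:T_\beta(Y)\to X$ is an embedding with $B-\iota_{T_\beta(Y)}$ finite rank, and here one invokes a sub-lemma: \emph{if $Z$ is a closed subspace of $X$ and $B:Z\to X$ is an embedding with $B-\iota_Z$ finite rank, then $B$ extends to an automorphism of $X$} --- extend $B-\iota_Z$ to a finite-rank operator $\hat R$ on $X$, so $\mathrm{Id}_X+\hat R$ is Fredholm of index $0$ and injective on $Z$, then add to it an isomorphism from its (finite-dimensional) kernel onto a complement of its range, carried by a projection onto that kernel which vanishes on $Z$; the result is an injective, hence invertible, extension of $B$. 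This gives $A\in GL(X)$ with $AT_\beta=T_\alpha$, so $T_\alpha\sim T_\beta$. Conversely, assume $T_\alpha=AT_\beta$ with $\max\{\|A\|,\|A^{-1}\|\}\leq n$, and suppose $\alpha_k\neq\beta_k$. On $E_k$ one has $T_\beta=\iota_{E_k}$ and $T_\alpha=\iota_{E_k}+u_k$ (or the reverse), so $A$ --- respectively $A^{-1}$ --- extends $\iota_{E_k}+u_k:E_k\to X$; as ${\rm ext}$ is subadditive and ${\rm ext}(\iota_{E_k})\leq\|\mathrm{Id}_X\|=1$, it follows that $\max\{\|A\|,\|A^{-1}\|\}\geq{\rm ext}(\iota_{E_k}+u_k)\geq{\rm ext}(u_k)-1=c_k-1$. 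Thus $T_\alpha\sim_n T_\beta$ forces $\alpha$ and $\beta$ to agree on every $k$ with $c_k>n+1$. Since $\sim=\bigcup_n\sim_n$ and $E_0=\bigcup_m E_0^m$, combining the two implications yields $T_\alpha\sim T_\beta\Leftrightarrow\alpha E_0\beta$; hence $\alpha\mapsto T_\alpha$ is a Borel reduction of $E_0$ to $({\rm Emb}(Y,X),\sim)$, and non-smoothness of $\sim$ follows because $E_0$ is non-smooth.

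\textbf{Main obstacle.}
I expect the delicate point to be the construction: the blocks $E_k$ must sit in ``generic enough'' position to continue the FDD and to leave $Y$ infinite-codimensional, while each carries a non-UFO witness with controlled norm and large extension constant --- this is exactly what the localized form of non-UFO is designed to make routine, and proving that lemma (the quotient-factorization together with the $\sqrt d$ projection-constant estimate) is the real work. A secondary subtlety is the sub-lemma straightening finite-rank perturbations of embeddings, where one must be careful that $T_\beta(Y)$ need not be complemented in $X$.
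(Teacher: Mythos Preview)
Your proof is correct and follows essentially the same route as the paper: build an FDD $Y=\overline{\bigoplus_k E_k}$ inside $X$ together with small-norm operators $u_k:E_k\to X$ whose extension constants blow up (the paper imports this from \cite{MP} rather than proving your localization lemma), set $T_\alpha=\iota_Y+\sum_{\alpha_k=1}u_kP^Y_k$, and read off the backward direction from the extension constant exactly as you do. The one noteworthy difference is the forward direction: the paper splits $X$ along finite-dimensional complements of $T_\alpha(\sum_{k<m}E_k)$ and $T_\beta(\sum_{k<m}E_k)$ and invokes the ``all subspaces of a fixed finite codimension are isomorphic'' fact, whereas your Fredholm sub-lemma (finite-rank perturbations of embeddings extend to automorphisms) is a cleaner and more self-contained way to produce the required $A\in GL(X)$ with $AT_\beta=T_\alpha$, and it sidesteps the need to check that the isomorphism between the two finite-codimensional pieces can be chosen to be the identity on the common tail $B$.
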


\begin{proof}
Since $X$ is not UFO there exists (see \cite{MP}) a subspace $Y\subset X$ admitting a finite dimensional decomposition $Y = \sum Y_n$ and a sequence of norm-one operators $T_n: Y_n\to X$ such that every extension of $T_n$ to $X$ has norm not less than $2^{2n}$. Let $\alpha\in 2^\omega$, we define an operator $T_\alpha: Y \to X$, $T_\alpha(\sum y_n) = \sum_n2^{-n}T_n^{\alpha(n)}y_n$, where
\[ T^{\alpha(n)}(y_n) =
\begin{cases}
T_ny_n & if \; \alpha(n) = 1\\
y_n & if \; \alpha(n) = 0
\end{cases}\]

The operator $T_\alpha$ is obviously compact, $\|T_\alpha\| \leq 1$ and does not admit any extension to an operator $X \to X$. Take now $0 < \varepsilon < 1$ and consider the $(1+\varepsilon)$-isometry $A_\alpha : Y \to X$, $A_\alpha = Id + \varepsilon T_\alpha$; this map does not admit any extension to $X$ as neither $T_\alpha$ does it. Let $Y_\alpha = A_\alpha(Y)$, we define the following map$$\left(2^\omega, E_0\right) \stackrel{f}\longrightarrow \left({\rm Emb}(Y, X), \simeq\right),\;\; f(\alpha) = A_\alpha.$$
The map $f$ is well defined, uniformly bounded, and Borel (actually continuous).

 Let us see that $\alpha E_0 \beta$ if and only if $Y_\alpha$ and $Y_\beta$ are in the same po\-si\-tion. Assume first that $\alpha E_0 \beta$ and let $m$ be such that $\alpha  E_0^m \beta$, then it follows that ${T_\alpha}_{|_{\sum_{n\geq m} Y_n}} = {T_\beta}_{|_{\sum_{n\geq m} Y_n}}$, and we can write $$Y_\alpha = A_\alpha (\sum_{n< m} Y_n) \oplus B\ {\rm and\ } Y_\beta = A_\beta (\sum_{n< m} Y_n) \oplus B,$$ where $ B = A_\alpha (\sum_{n\geq m} Y_n) = A_\beta (\sum_{n\geq m} Y_n)$. The (canonical) projections $P_\alpha : Y_\alpha \to Y_\alpha$ and $P_\beta : Y_\beta \to Y_\beta$,  with ranges $P_\alpha(Y_\alpha) = A_\alpha (\sum_{n< m} Y_n)$ and $P_\beta(Y_\beta) = A_\beta (\sum_{n< m} Y_n)$ of finite dimension, admit extensions  $\widehat{P_\alpha} : X \to A_\alpha(\sum_{n< m} Y_n)$ and $\widehat{P_\beta} : X \to A_\beta(\sum_{n< m} Y_n)$, respectively, which are also projections. Let us write $X_\alpha = (1_X - \widehat{P_\alpha})(X)$ and $X_\beta = (1_X - \widehat{P_\beta})(X)$, hence $B\subset X_\alpha\cap X_\beta$, and since $X = A_\alpha(\sum_{n< m} Y_n)\oplus X_\alpha = A_\beta(\sum_{n< m} Y_n)\oplus X_\beta$ we can easily define an automorphism $\tau$ of $X$ such that $\tau A_\alpha = A_\beta$ since the finite dimensional pieces have the same dimension and so $X_\alpha$ and $X_\beta$ are isomorphic (as all hyperplanes in a Banach spaces are). Let us note for future use that, by the well-known fact that all subspaces of codimension $k$ in a Banach space are $c(k)$-isomorphic, for some $c(k)$, we may deduce that $X_\alpha$ and $X_\beta$ are $c_m$-isomorphic, where $c_m$ only depends on $m$. So we can actually control the norms of
$\tau$ and $\tau^{-1}$ by some constant $c^{\prime}_m$ depending only on $m$, i.e $Y_\alpha \sim_{c^{\prime}_m} Y_\beta$ once $\alpha E_0^m \beta$. \\

On the other direction, we shall prove that if $\alpha$ and $\beta$ are not $E_0^m$-related and $n \geq m$ is such that   $\alpha_n = 0$ and $\beta_n = 1$, then any map $\tau: X \to X$ such that $\tau A_\alpha=A_\beta$ has norm at least  $\frac{1}{2}(\varepsilon 2^{m}-1)$.
This implies that if $\alpha$ and $\beta$ are not $E_0$-related (and without loss of generality, $\alpha_n=0$ and $\beta_n=1$ for infinitely many $n$) then there is no automorphism  $\tau : X \to X$ such that $\tau A_\alpha = A_\beta$.

So let $n \geq m$ be such that   $\alpha_n = 0$ and $\beta_n = 1$, and $\tau: X \to X$ be such that  $\tau A_\alpha=A_\beta$.
 In particular, $\tau_{|_{Y_\alpha}} = A_\beta A^{-1}_\alpha$, which means that $\tau : X\to X$ extends $A_\alpha^\beta = A_\beta A^{-1}_\alpha: Y_\alpha \to Y_\beta$. Since$$\tau(y) = A_\beta(y) - \varepsilon \tau T_\alpha (y),$$take $y=y_n\in Y_n$.

\begin{eqnarray*} \tau(y_n) &= &A_\beta(y_n) - \varepsilon \tau T_\alpha (y_n) \\
&=& y_n + \varepsilon 2^{-n}T_n(y_n) -\varepsilon 2^{-n}\tau(y_n)\end{eqnarray*}
whence
$$ (1 +\varepsilon 2^{-n})\tau(y_n) - y_n = \varepsilon  2^{-n} T_n(y_n)$$
then
$$\frac{2^n}{\varepsilon}\left(  ( 1+\varepsilon 2^{-n} )\tau(y_n) - y_n \right)= T_n(y_n),$$
so
$$\frac{2^n}{\varepsilon}\left(  ( 1+\varepsilon 2^{-n} )\tau - id \right)$$extends $T_n$. Therefore
$$2^{2n} \leq \| \frac{2^n}{\varepsilon}\left(  (  1+ \varepsilon 2^{-n})\tau - id \right) \| \leq \frac{2^{n} } {\varepsilon} (2\|\tau\|+1 )$$ and $\|\tau\| \geq \frac{1}{2}(\varepsilon 2^{n}-1) \geq \frac{1}{2}(\varepsilon 2^{m}-1)$.
\end{proof}

Beyond the automorphic space problem, it would be interesting to investigate for which $X$ the relation of position of $Y$ in $X$ is smooth, for all choices of $Y$.  The above shows that $X$ would have to be uniformly finitely extensible, leading to the question:

\begin{quest} Find a non automorphic, uniformly finitely extensible space $X$, such that the position of $Y$ inside $X$ is smooth for all subspaces $Y$ of $X$. \end{quest}

 We recall that it is an open conjecture (see \cite{CFM} and \cite{castplich}) whether the UFO property is equivalent to being either ${\mathcal L}_\infty$ or isomorphic to the Hilbert space.
 Among ${\mathcal L}_\infty$ (and therefore UFO) spaces which are not automorphic, it remains fairly open when $E_0$ can be reduced to positions of subspaces. For example,
remembering that in \cite{castplich} it is shown that $a(Y,C(0,1))=c$ for many choices of $Y$ (such as $Y=\ell_p$, $p \neq 1$, or $Y=C(0,1)$ itself):

\begin{quest} Find a space $Y$ such that $E_0$ is reducible to the positions of $Y$ inside $C(0,1)$. \end{quest}

\

Note that  the relation of equivalence of positions of $Y$ inside $X$  is the orbit relation of the action $(A,T) \mapsto AT$ of the group $GL(X)$ on the standard Borel space ${\rm Emb}(Y,X)$. So although $GL(X)$ is not a Polish group, one may be led to look for uniformity arguments
to prove that equivalence of positions of space $Y$ inside $X$ is reducible to an orbit relation induced by action of some Polish group, and in particular is not maximum among analytic equivalence relations.
One of our main results, however, is that this is not so.
To prove this we turn to reductions of the relation $E_1$ in the case of the classical spaces $\ell_p$ and $L_p$.

\begin{pr}\label{E1} Let $1\leq p <\infty$. Let $Y, X$ be separable.
Assume  there is a Borel reduction $r$ of $(2^\omega,E_0)$ to $({\rm Emb}(Y,X),\sim)$ with the following properties:
\begin{itemize}
 \item[(a)] $r(\alpha)$ is bounded uniformly on $\alpha \in 2^\omega$
 \item[(b)] there exists a sequence $(c_k)_k$ of integers such that
$$\alpha E_0^k \beta \Rightarrow r(\alpha) \sim_{c_k}  r(\beta)$$
\item[(c)] there exists a sequence $(d_k)_k$ of integers tending to infinity such that
if $\alpha$ and $\beta$ are not $E_0^k$-related  and we assume $\alpha(i)=0$ and $\beta(i)=1$ for some $i \geq k$, then there is no map $T$ on $X$ of norm less than $d_k$ such that $T r(\alpha) = r(\beta)$.
\end{itemize}
Then the relation $E_1$ is Borel reducible to $({\rm Emb}(\ell_p(Y), \ell_p(X)),\sim)$ and to\break $({\rm Emb}(c_0(Y), c_0(X)),\sim)$.
\end{pr}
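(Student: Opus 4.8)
The plan is to encode an arbitrary sequence $x=(x_k)_k\in(2^\omega)^\omega$ (which we think of as a point on which $E_1$ acts) by "stacking" the embeddings $r(x_k)$ coordinatewise into a single embedding of $\ell_p(Y)$ into $\ell_p(X)$ (respectively $c_0(Y)$ into $c_0(X)$). Concretely, I would set
$$R(x)=\bigoplus_{k}\; \lambda_k\, r(x_k)\colon \ell_p(Y)\to\ell_p(X),$$
where the scalars $\lambda_k>0$ are chosen summable-to-finite and bounded below away from $0$, say $\lambda_k\to 1$ or simply $\lambda_k=1$ — boundedness of the family $\{r(\alpha):\alpha\in 2^\omega\}$ from hypothesis (a) guarantees that $R(x)$ is a well-defined bounded operator, and since each $r(x_k)$ is an infinite-codimensional embedding one checks readily that $R(x)$ is again an infinite-codimensional embedding (the codimension is already infinite in the first block). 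One must verify that $x\mapsto R(x)$ is Borel from $(2^\omega)^\omega$ into ${\rm Emb}(\ell_p(Y),\ell_p(X))$ with the SOT structure; this follows from Borelness of $r$ together with the fact that SOT-convergence of the blocks assembles to SOT-convergence of the direct sum, using the uniform bound again to control tails.

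Next I would prove the reduction property: $x\,E_1\,y$ if and only if $R(x)\sim R(y)$. For the forward direction, suppose $x_k=y_k$ for all $k\geq N$. On the tail blocks $R(x)$ and $R(y)$ literally agree, so one only needs to realize the automorphism on the finitely many initial blocks $k<N$; for each such $k$, hypothesis (b) is irrelevant — what we need is merely that $r(x_k)\sim r(x_k)$ trivially — wait, more precisely on blocks $k<N$ we have $r(x_k)$ and $r(y_k)$ possibly inequivalent, so this direction needs care. The correct move: $x\,E_1\,y$ only requires eventual \emph{equality}, so on the first $N$ blocks we have no relation at all, and $R(x)\sim R(y)$ need not follow — unless we also use that $E_0$-equivalence is handled by (b). Re-examining: since $r$ is a reduction of $E_0$, if $x_k=y_k$ then $r(x_k)=r(y_k)$ exactly (same point, same value), so on \emph{all} blocks $k\geq N$ the two assembled operators coincide, and on the finitely many remaining blocks we glue automorphisms of the corresponding copies of $X$ — here we use that $r(x_k)$ and $r(y_k)$ are both embeddings of the separable space $Y$ into $X$ and that, being on a finite initial segment, their direct sum with the identity on the tail yields an automorphism of $\ell_p(X)$ after composing with any fixed isomorphism between the (isomorphic) complementary pieces. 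Assembling a bounded automorphism of $\ell_p(X)$ from an automorphism on finitely many coordinates and the identity elsewhere is routine.

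For the converse — the heart of the argument — assume $x$ and $y$ are \emph{not} $E_1$-related, so $x_k\neq y_k$ for infinitely many $k$, and suppose toward a contradiction that $T$ is an automorphism of $\ell_p(X)$ with $TR(x)=R(y)$. Fix one such $k$ with $x_k\neq y_k$; choosing coordinate $i$ with $x_k(i)\neq y_k(i)$, hypothesis (c) says no operator of norm below $d_{i}$ intertwines $r(x_k)$ and $r(y_k)$. The idea is to "read off" from $T$ a family of operators on $X$, one per block, whose norms cannot all stay bounded: compressing $T$ to the block decomposition gives operators $T_{j,k}=Q_j T \iota_k$ (with $\iota_k,Q_j$ the canonical inclusion/projection for block $k$ resp. $j$), and $TR(x)=R(y)$ forces, on block $k$, the relation $\sum_j \lambda_k^{-1}\lambda_j^{-1}\,T_{j,k}\, r(x_k)$-type identities matching $r(y_k)$. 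Because the $\lambda_j$ are bounded above and below, boundedness of $T$ would force a \emph{uniform} bound on the "diagonal" intertwiners $T_{k,k}$ and make the off-diagonal contributions negligible in the limit (here is where $p<\infty$ and the $\ell_p$/$c_0$ structure is used, via a gliding-hump / small-perturbation argument), ultimately producing, for large $k$ with $x_k\neq y_k$, an operator on $X$ of norm below $d_{i_k}\to\infty$ intertwining $r(x_k)$ and $r(y_k)$ up to a small error — contradicting (c) for $k$ large enough.

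\emph{Main obstacle.} The delicate point is precisely this last step: extracting from a single bounded $T$ on $\ell_p(X)$ a genuine bounded intertwiner of $r(x_k)$ and $r(y_k)$ on the single space $X$. Off-diagonal blocks $T_{j,k}$ with $j\neq k$ do not obviously vanish, so one cannot simply use $T_{k,k}$; the argument must exploit that $R(x)$ and $R(y)$ are block-diagonal and that, for an intertwining relation to hold on block $k$, the relevant part of $T$ acting "within" block $k$ must by itself nearly intertwine $r(x_k)$ and $r(y_k)$, the cross terms being controllable by a perturbation argument valid in $\ell_p$ and $c_0$ (summability of tails, or the $c_0$ max-norm). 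Making this perturbation estimate quantitative enough to beat the growth of $(d_k)_k$ is where the real work lies, and it is the reason the statement is restricted to $\ell_p$- and $c_0$-sums rather than arbitrary direct sums.
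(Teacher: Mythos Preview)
Your construction $R(x)=\bigoplus_k r(x_k)$ has a genuine gap in both directions; the paper uses a different and essential idea that you are missing.

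\emph{Forward direction.} If $x\,E_1\,y$ with $x_k=y_k$ for $k\ge N$, then on a block $k<N$ you may well have $x_k$ not $E_0$-related to $y_k$, and then $r(x_k)\not\sim r(y_k)$ in $X$ precisely because $r$ is a reduction. Your attempt to ``glue automorphisms'' on these finitely many blocks is unsupported: there is no automorphism of $X$ (hence none of $X^{N-1}$ acting blockwise) witnessing the required equivalence, and an arbitrary isomorphism between ``complementary pieces'' does not produce one either.

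\emph{Converse direction.} Your ``main obstacle'' is not the obstacle. Since $R(x)$ and $R(y)$ are both block-diagonal, the equation $T R(x)=R(y)$ restricted to $Y_k$ reads $T(r(x_k)v)=r(y_k)v\in X_k$, so $S_k:=P_kT|_{X_k}$ satisfies $S_k\, r(x_k)=r(y_k)$ \emph{exactly}; no gliding-hump or perturbation is needed. The real problem is that (c) only yields $\|S_k\|\ge d_{j}$ where $j$ is an index $\ge$ which $x_k$ and $y_k$ still differ with the correct parity---and nothing forces this $j$ to be large. Take $x_k=0^\omega$ and $y_k=10^\omega$ for every $k$: then $x\not E_1 y$, yet for every $k$ the only available lower bound is $\|T\|\ge d_1$, which is no contradiction. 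Your assertion that $d_{i_k}\to\infty$ is simply unjustified.

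\emph{The missing idea.} The paper transposes the two indices. For $\alpha=(\alpha_n)_n\in(2^\omega)^\omega$ it sets the $k$-th block of $R(\alpha)$ to be $r\big((\alpha_n(k))_{n}\big)$, i.e.\ $r$ applied to the sequence of $k$-th coordinates of the $\alpha_n$'s. If $\alpha\, E_1\,\beta$ with $\alpha_n=\beta_n$ for $n\ge m$, then for \emph{every} $k$ the sequences $(\alpha_n(k))_n$ and $(\beta_n(k))_n$ are $E_0^m$-related, so (b) produces block automorphisms uniformly bounded by $c_m$, which paste to a global one. If $\alpha\not E_1\beta$, then for arbitrarily large $k$ one finds $i_k$ with $\alpha_k(i_k)\ne\beta_k(i_k)$; the two sequences sitting in block $i_k$ now differ at position $k$, so the exact compression argument above together with (c) forces $\|P_{i_k}T|_{X_{i_k}}\|\ge d_k\to\infty$. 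The transposition is precisely what converts the $E_1$-index $k$ into the depth parameter that governs the constants in (b) and (c).
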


\begin{proof} To each $\alpha=(\alpha_n)_n \in (2^\omega)^\omega$, where
for each $n \in \omega$, $\alpha_n=(\alpha_n(k))_{k \in \omega}$,
 associate
$$R(\alpha)=(r((\alpha_n(1))_{n \in \omega}), r((\alpha_n(2))_{n \in \omega}), \ldots, r((\alpha_n(k))_{n \in \omega}), \ldots).$$Because $r$ is bounded by (a), this defines an embedding of $\ell_p(Y)$ into $\ell_p(X)$ (of $c_0(Y)$ into $c_0(X)$), and $R$ is Borel. We denote by $Y_k$ and $X_k$ the $k$-th copies of $Y$ and $X$ respectively.

Note that if $\alpha E_1 \beta$ then there is $m\in\N$ such that for $k \geq m$, $\alpha_k=\beta_k$, which implies
that for each $k$, $\alpha_n(k)_n$ and $\beta_n(k)_n$ are $E_0$-related and actually only differ by at most the first $m$-terms.  Then by the property (b),  we may paste the maps $T_k: X_k \rightarrow X_k$ for which $T_k r(\alpha_n(k)_n)=r(\beta_n(k)_n)$ and ${\rm max}\{\|T_k\|, \|T_k^{-1}\|\} \leq c_m$, to define a global automorphism $T$ witnessing that $R(\alpha)$ and $R(\beta)$ are in the same position.

On the other hand assume $\alpha$ and $\beta$ are not $E_1$-related but that there is an automorphism $T$ of $\ell_p(X)$ such that $R(\alpha)=TR(\beta)$; let $K$ be an infinite subset of $\N$ such that for all $k \in K$, $\alpha_k \neq \beta_k$ and for each $k \in K$ let $i_k$ be such that $\alpha_k(i_k) \neq \beta_k(i_k)$.  Without loss of generality assume $\alpha_k(i_k)=0$ and $\beta_k(i_k)=1$ with $K$ still infinite.
Since $r(\alpha_n(i_k)_n)$ (resp. $r(\beta_n(i_k)_n)$) is an embedding of $Y_{i_k}$ into $X_{i_k}$, we have
$$r(\alpha_n(i_k)_n)= P_k T_k r(\beta_n(i_k)_n),$$
where $T_k: X_{i_k} \rightarrow \ell_p(X)$ (or $T_k: X_{i_k} \rightarrow c_0(X)$) is the restriction of $T$ to $X_{i_k}$ and $P_k$ the canonical projection onto $X_{i_k}$.
Note that $(\alpha_n(i_k))_n$ and $(\beta_n(i_k))_n$ are not
$E_0^k$-related, since $\alpha_k(i_k)=0 \neq 1=\beta_k(i_k)$. Therefore by (c) the map $P_k T_k$ has norm at least $d_k$. And therefore
$\|T\| \geq d_k$ for all $k \in K$, which is a contradiction.
\end{proof}

We finally prove that $E_1$ is reducible to positions of $Y$ inside $X$ for classical spaces such as the $\ell_p$'s, and therefore this relation is not reducible to the orbit  relation induced by the action of a Polish group.

\begin{cor}
Let $X$ be a separable non UFO space. There exists a subspace $Y\subset X$ such that $E_1$ is Borel reducible to $({\rm Emb}(\ell_p(Y), \ell_p(X)),\sim)$ and to $({\rm Emb}(c_0(Y), c_0(X)),\sim)$.
\end{cor}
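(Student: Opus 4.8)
The plan is to deduce the corollary by simply chaining Theorem \ref{NonUfo} with Proposition \ref{E1}. Indeed, Proposition \ref{E1} is stated precisely so as to accept as hypotheses the output of the construction in the proof of Theorem \ref{NonUfo}, so the work is to verify that the reduction $f(\alpha)=A_\alpha$ produced there satisfies conditions (a), (b), (c).

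First I would invoke the non-UFO hypothesis on $X$: by \cite{MP}, as in the proof of Theorem \ref{NonUfo}, there is a subspace $Y\subset X$ with an FDD $Y=\sum Y_n$ and norm-one operators $T_n\colon Y_n\to X$ no extension of which to $X$ has norm below $2^{2n}$. Fix $0<\varepsilon<1$ and set $r(\alpha)=A_\alpha=Id+\varepsilon T_\alpha$ as in that proof; this is a Borel (indeed continuous) map from $(2^\omega,E_0)$ into $({\rm Emb}(Y,X),\sim)$, and by the theorem it is a reduction of $E_0$ to $\sim$. Condition (a) is immediate since $\|A_\alpha\|\le 1+\varepsilon$ uniformly in $\alpha$. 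Condition (b) is exactly the statement, proved in the first half of the proof of Theorem \ref{NonUfo}, that $\alpha E_0^k\beta$ implies $Y_\alpha\sim_{c_k'}Y_\beta$ with $c_k'$ depending only on $k$; so we take $c_k=\lceil c_k'\rceil$. Condition (c) is the second half of that proof: if $\alpha,\beta$ are not $E_0^k$-related with $\alpha(i)=0$, $\beta(i)=1$ for some $i\ge k$, then any $\tau$ on $X$ with $\tau A_\alpha=A_\beta$ has $\|\tau\|\ge \tfrac12(\varepsilon 2^{k}-1)$; hence we set $d_k=\max\{1,\lfloor \tfrac12(\varepsilon 2^{k}-1)\rfloor\}$, an integer sequence tending to infinity.

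Having checked (a)–(c), Proposition \ref{E1} applies verbatim (with this $p$, or with $c_0$ in place of $\ell_p$) and yields that $E_1$ is Borel reducible to $({\rm Emb}(\ell_p(Y),\ell_p(X)),\sim)$ and to $({\rm Emb}(c_0(Y),c_0(X)),\sim)$, which is the assertion of the corollary. I expect no genuine obstacle here: the only thing to be slightly careful about is bookkeeping — confirming that the constants $c_k'$ and the lower bound $\tfrac12(\varepsilon 2^{k}-1)$ extracted from the proof of Theorem \ref{NonUfo} really depend only on the index $k$ (and on the fixed $\varepsilon$), not on the particular $\alpha,\beta$, so that they can be packaged as the integer sequences $(c_k)_k$ and $(d_k)_k$ demanded by Proposition \ref{E1}. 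One could alternatively phrase the argument as: Theorem \ref{NonUfo} together with its proof shows that \emph{every} non-UFO $X$ admits a subspace $Y$ and a reduction $r$ of the required ``quantitative'' form, and then the corollary is the formal consequence via Proposition \ref{E1}; this is the cleanest way to present it and avoids re-running any estimate.
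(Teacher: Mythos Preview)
Your proposal is correct and follows exactly the paper's approach: the paper's proof is the one-line observation that the map $f$ from Theorem~\ref{NonUfo} verifies conditions (a)--(c) of Proposition~\ref{E1}, and you have simply spelled out that verification in detail. The only cosmetic point is the choice $d_k=\max\{1,\lfloor\tfrac12(\varepsilon 2^k-1)\rfloor\}$: for small $k$ the bound $\tfrac12(\varepsilon 2^k-1)$ may be nonpositive, so taking the max with $1$ is not quite right; but since Proposition~\ref{E1} only requires integers $d_k\to\infty$, you can simply set $d_k=\lfloor\tfrac12(\varepsilon 2^k-1)\rfloor$ (allowing nonpositive values for small $k$, where the condition becomes vacuous) and the argument goes through unchanged.
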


\begin{proof}
Simply notice that the Borel map $f: (2^\omega, E_0) \to ({\rm Emb}(Y, X),\sim)$ in Proposition \ref{NonUfo} verifies the conditions in Proposition \ref{E1}.
\end{proof}

\begin{thm}\label{E1lp}  For $1\leq p < \infty$, $p\not=2$,
the relation $E_1$ is Borel reducible to  $({\rm Emb}(\ell_p, \ell_p),\sim)$
and to $({\rm Emb}(\ell_p,L_p),\sim)$. Therefore the relation of position of $\ell_p$ in $\ell_p$ (resp. in $L_p$) is not Borel reducible to an orbit relation induced by the action of a Polish group.\end{thm}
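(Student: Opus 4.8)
The plan is to deduce the statement from a single application of Proposition~\ref{E1}, with the source space taken to be $\ell_p$ and the target space $\ell_p$ (respectively $L_p$). So the first — and really the only substantial — step would be to exhibit a subspace $Y$ of $\ell_p$ (respectively of $L_p$) which is at once \emph{isomorphic to} $\ell_p$ and a non-uniformly-finitely-extensible ``witness'' of the shape used in the proof of Theorem~\ref{NonUfo}: $Y$ should carry a block finite dimensional decomposition $Y=(\sum_n Y_n)_{\ell_p}$ together with norm-one operators $T_n\colon Y_n\to\ell_p$ (respectively into $L_p$), every extension of which to the ambient space has norm at least $2^{2n}$. Granting such a $Y$, the construction and the estimates in the proof of Theorem~\ref{NonUfo} apply verbatim and yield a Borel map $f\colon(2^\omega,E_0)\to({\rm Emb}(Y,\ell_p),\sim)$ which, exactly as in the argument of the corollary following Proposition~\ref{E1}, is a reduction satisfying conditions (a), (b), (c) there; since $Y\simeq\ell_p$ and composition with a fixed isomorphism is Borel and preserves $\sim$, we regard it as a reduction of $E_0$ to $({\rm Emb}(\ell_p,\ell_p),\sim)$. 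Proposition~\ref{E1} then gives $E_1\leq_B({\rm Emb}(\ell_p(Y),\ell_p(\ell_p)),\sim)$, and as $\ell_p(Y)\simeq\ell_p(\ell_p)\simeq\ell_p$ this is the asserted reduction; for the second target one argues identically, replacing $\ell_p(\ell_p)\simeq\ell_p$ by $\ell_p(L_p)\simeq L_p$. The final clause is then free: composing a hypothetical Borel reduction of such a position relation to an orbit relation of a Polish group with the above would give $E_1\leq_B E_G$, which is impossible.

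To build $Y$ I would exploit $p\neq2$ through the facts quoted from \cite{castplich} that $a(\ell_p,\ell_p)=c$, respectively $a(\ell_p,L_p)=c$ (in the second case $p=1$ is covered since $\ell_1$ is non-reflexive, and $1<p<2$ since $\ell_p$ has a non-strongly-embedded copy in $L_p$). The point I would extract is that \emph{not every copy of $\ell_p$ inside $\ell_p$ can be complemented}: were they all complemented, then, using that $\ell_p$ is prime, any two infinite codimensional embeddings of $\ell_p$ into $\ell_p$ would have complemented ranges and complements, all isomorphic to $\ell_p$, hence some automorphism of $\ell_p$ would carry one range onto the other and the embeddings would be equivalent, contradicting $a(\ell_p,\ell_p)=c$. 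Fix then an uncomplemented $Z\simeq\ell_p$ inside $\ell_p$, with a normalized basis $(z_i)$ which is $C$-equivalent to the unit vector basis of $\ell_p$. A weak-operator compactness argument shows that the projection constants $\lambda([z_i:i\le n],\ell_p)$ are unbounded in $n$: a uniform bound would produce, along a convergent subnet of the corresponding finite rank projections, a bounded projection of $\ell_p$ onto $\overline{Z}$, contradicting the choice of $Z$.

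Having this, I would pick $n(1)<n(2)<\cdots$ so that $\lambda(E_k,\ell_p)$ is as large as needed, where $E_k:=[z_i:i\le n(k)]$ is $C$-isomorphic to $\ell_p^{\,n(k)}$, transport each $E_k$ by a coordinate shift (an isometry, which does not decrease its projection constant in $\ell_p$) to a copy $E_k'$ supported on the $k$-th block of a fixed block decomposition of $\ell_p$, and set $Y:=(\sum_k E_k')_{\ell_p}$. Being a block-disjoint $\ell_p$-sum of spaces uniformly isomorphic to finite $\ell_p$-spaces, $Y\simeq\ell_p$; it may clearly be taken of infinite codimension, and it carries the block FDD $(E_k')_k$. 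For $T_k$ I would take a norm-one isomorphism of $E_k'$ onto a coordinate subspace $\ell_p^{\,n(k)}$ of $\ell_p$: if $T\colon\ell_p\to\ell_p$ extends $T_k$, then $T_k^{-1}RT$, with $R$ the norm-one coordinate projection onto that $\ell_p^{\,n(k)}$, is a projection of $\ell_p$ onto $E_k'$ of norm at most $\|T_k^{-1}\|\,\|T\|$, so choosing $n(k)$ with $\lambda(E_k',\ell_p)\ge 2^{2k}\sup_j\|T_j^{-1}\|$ forces $\|T\|\ge2^{2k}$, exactly the required witness property. For the $L_p$ statement the same recipe works inside $L_p$: $a(\ell_p,L_p)=c$ again supplies an uncomplemented copy of $\ell_p$ in $L_p$, one places the $E_k'$ on disjoint parts of the canonical $1$-complemented copy of $\ell_p$ in $L_p$ spanned by normalized indicators, and since the extension estimate only uses the $1$-complementedness of the target coordinate subspace (and any extension into $L_p$ restricts to one relative to that complemented $\ell_p$) the bound $\|T\|\ge2^{2k}$ persists.

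The delicate part is precisely this construction of $Y$: extracting an \emph{uncomplemented} copy of $\ell_p$ from the quoted information $a(\ell_p,\cdot)=c$, and then organising it as a block finite dimensional decomposition whose pieces are individually badly complemented in the ambient space — the weak-operator compactness step and the isometric relocation of the blocks being the careful ingredients (with the case $p=1$ requiring the analogous biduality argument in place of reflexivity). Once $Y$ is in hand, everything else — feeding it into the proof of Theorem~\ref{NonUfo}, then into Proposition~\ref{E1}, and the identifications $\ell_p(\ell_p)\simeq\ell_p$ and $\ell_p(L_p)\simeq L_p$ — is routine.
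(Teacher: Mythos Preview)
Your overall strategy coincides with the paper's: find a subspace $Y\subset\ell_p$ (resp.\ $L_p$) isomorphic to $\ell_p$, carrying an FDD with norm-one operators $T_n$ whose extensions blow up, feed this into the proof of Theorem~\ref{NonUfo} to get a reduction of $E_0$ satisfying (a)--(c), and invoke Proposition~\ref{E1} together with $\ell_p(\ell_p)\simeq\ell_p$ and $\ell_p(L_p)\simeq L_p$. The paper does exactly this, except that for the existence of $Y$ it simply quotes Proposition~3.15 of \cite{castplich}, whereas you attempt to rebuild that proposition by hand from the cardinality fact $a(\ell_p,\cdot)=c$.

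Your reconstruction has the right shape but two soft spots. First, the ``coordinate shift'' step is not quite well-posed: the finite-dimensional pieces $E_k=[z_i:i\le n(k)]$ of an uncomplemented copy of $\ell_p$ typically have infinite support, so a shift does not place them into disjoint blocks. You need an intermediate truncation/perturbation to finitely supported copies (standard, and harmless for the projection constant up to a factor), before you can relocate them disjointly; only then is $Y=(\sum E_k')_{\ell_p}$ a genuine block subspace. Second, the WOT-compactness argument producing a limiting projection onto $Z$ uses reflexivity and does not go through for $p=1$ as stated; your parenthetical ``analogous biduality argument'' would land the limit in $\ell_1^{**}$ rather than $\ell_1$, so this case still needs an argument (e.g.\ invoking directly the classical fact that $\ell_1$ contains badly complemented copies of $\ell_1^n$, or citing the Castillo--Plichko result as the paper does). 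For the $L_p$ target, the cleanest route is the one you essentially sketch at the end: reuse the $\ell_p$-witness $Y$ inside the canonical $1$-complemented copy of $\ell_p\subset L_p$ and observe that any $L_p$-extension of $T_n$ restricts-and-projects to an $\ell_p$-extension; this avoids redoing the uncomplemented-copy argument inside $L_p$.
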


\begin{proof}
 By Proposition 3.15 in \cite{castplich}, there exists a subspace $Y\subset \ell_p$, admitting a FDD, which is isomorphic to $\ell_p$
 and for which there is a Borel reduction $(2^\omega, E_0)\to ({\rm Emb}(Y, \ell_p),\sim)$ which also verifies conditions in Proposition \ref{E1}.
The same arguments based on \cite{castplich} Proposition 3.15 work  for $X = L_p$.
\end{proof}

\subsection{Complexity of complemented positions}

A classical method to compute equivalent positions is to look at embeddings as complemented subspaces and compare the summands. Complemented positions will also be more easily related to the relation of  isomorphism of complex structures.

This motivates to define, for
 $X, Y$ be infinite dimensional separable Banach spaces,
 the {\em standard Borel space of complemented embeddings of $Y$ into $X$} as the Borel subspace
${\rm Emb}_{c}(Y,X)$
of ${\rm Emb}(Y,X) \times {\cal P}(X)$ given by
$$(T,P) \in {\rm Emb}_{c}(Y,X) \Leftrightarrow TY=PX.$$
We let $\sim$ be the analytic equivalence relation defined on ${\rm Emb}_{c}(Y,X)$ by
$$(T,P) \sim (U,Q) \Leftrightarrow T \sim U,$$ and we call the complexity of this relation the
{\em complexity} of complemented positions of $Y$ in $X$.

\begin{de} Let  $X, Y$ be infinite dimensional and separable. A {\em complemented position} of $Y$ in $X$ is an $\sim$-equivalence class on ${\rm Emb}_c(Y,X)$.\end{de}

We note that, as is to be expected:

\begin{pr}
The map ${\rm Emb}_{c}(Y,X) \rightarrow
{\rm Emb}(Y,X)$ defined by $(T,P) \mapsto T$ is Borel. In particular,
the complexity of complemented positions of $Y$ in $X$ is a lower bound of the complexity of positions of $Y$ in $X$.
\end{pr}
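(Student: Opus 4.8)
The statement to prove is that the map $\Phi : {\rm Emb}_c(Y,X) \to {\rm Emb}(Y,X)$, $(T,P) \mapsto T$, is Borel, and consequently that the complexity of complemented positions of $Y$ in $X$ is $\leq_B$ the complexity of positions of $Y$ in $X$. The plan is to verify Borelness directly against a generating family for the Borel structure on ${\rm Emb}(Y,X)$, and then read off the reducibility statement from the definition of $\sim$ on ${\rm Emb}_c(Y,X)$.

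First, recall that ${\rm Emb}(Y,X)$ carries the Borel structure generated by the strong operator topology (it is a Borel subset of ${\cal B}(Y,X)$), so a basic generating family consists of sets of the form $\{T : \|Ty_i - x_i\| < \epsilon,\ i=1,\dots,n\}$ for finitely many $y_i \in Y$, $x_i \in X$ and $\epsilon > 0$; equivalently, it suffices to check that for each fixed $y \in Y$ the evaluation $(T,P) \mapsto Ty$ is a Borel map into $X$, since the SOT is generated by these evaluations. But $\Phi$ is nothing other than the restriction to ${\rm Emb}_c(Y,X)$ of the canonical projection ${\cal B}(Y,X) \times {\cal P}(X) \to {\cal B}(Y,X)$, which is continuous for the product of the two SOT topologies, hence Borel; restricting a Borel map to a Borel subset keeps it Borel, and ${\rm Emb}_c(Y,X)$ was defined precisely as a Borel subspace of the product. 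So the first half is essentially immediate once one unravels the definitions — the one small point to spell out is that the condition $(T,P) \in {\rm Emb}_c(Y,X) \Leftrightarrow TY = PX$ does cut out a Borel set, but that is part of the earlier setup and may be assumed.

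For the reducibility conclusion, observe that by definition $(T,P) \sim (U,Q)$ in ${\rm Emb}_c(Y,X)$ if and only if $T \sim U$ in ${\rm Emb}(Y,X)$; that is, $\Phi$ itself satisfies $(T,P) \sim (U,Q) \Leftrightarrow \Phi(T,P) \sim \Phi(U,Q)$. Combined with the Borelness of $\Phi$ just established, this says exactly that $\Phi$ is a Borel reduction of $({\rm Emb}_c(Y,X),\sim)$ to $({\rm Emb}(Y,X),\sim)$, whence $({\rm Emb}_c(Y,X),\sim) \leq_B ({\rm Emb}(Y,X),\sim)$, which is the asserted inequality of complexities. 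I do not anticipate a genuine obstacle here; the only thing requiring a line of care is making sure the relevant sets (the graph of $\Phi$, and the domain ${\rm Emb}_c(Y,X)$) really are Borel in the product standard Borel space, and this follows from the continuity of the coordinate projections in the SOT together with the fact — already recorded in the excerpt — that multiplication of operators is Borel, so that $TY = PX$ can be expressed as a Borel condition (e.g. $PTy = Ty$ for $y$ in a dense set of $Y$, together with $Px_n \in \overline{TY}$ for $x_n$ in a dense set of $X$, each of which is Borel).

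$$ $$

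\begin{proof}
The map ${\rm Emb}_{c}(Y,X) \rightarrow {\rm Emb}(Y,X)$, $(T,P) \mapsto T$, is the restriction of the canonical projection ${\cal B}(Y,X) \times {\cal P}(X) \rightarrow {\cal B}(Y,X)$ to the Borel subset ${\rm Emb}_c(Y,X)$. Since both factors carry the Borel structure generated by the SOT, this projection is continuous, hence Borel; its restriction to a Borel subspace remains Borel. As $(T,P) \sim (U,Q)$ holds in ${\rm Emb}_c(Y,X)$ precisely when $T \sim U$ holds in ${\rm Emb}(Y,X)$, the map $(T,P) \mapsto T$ is a Borel reduction of $({\rm Emb}_c(Y,X),\sim)$ to $({\rm Emb}(Y,X),\sim)$. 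Therefore the complexity of complemented positions of $Y$ in $X$ is a lower bound of the complexity of positions of $Y$ in $X$.
\end{proof}
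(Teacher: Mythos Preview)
Your proof is correct. The paper states this proposition without proof, treating it as immediate (``as is to be expected''); your argument---that $(T,P)\mapsto T$ is the restriction to a Borel subset of the continuous coordinate projection, and that the definition of $\sim$ on ${\rm Emb}_c(Y,X)$ makes this map a Borel reduction by construction---is exactly the routine verification the paper leaves to the reader.
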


We shall now use Proposition \ref{u} to show that the highest complexity $E_{\rm max}$ among analytic equivalence relations, can be achieved for the complexity of (complemented) positions of a space inside another. So there will be no upper bound other than $E_{\rm max}$ for the complexity of positions of a space inside another.

\begin{pr}\label{Emax}
If $U$ is Pe\l czy\'nski universal unconditional basis, then the complexity of the relation of (complemented) positions of $U$ in itself is $E_{\rm max}$.
\end{pr}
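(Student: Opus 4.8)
The plan is to reduce the problem to Proposition \ref{u}, which already gives that linear isomorphism between complemented subspaces of $U$ has complexity $E_{\rm max}$. Recall that by Pe\l czy\'nski's universality, $U$ is isomorphic to $(U\oplus U\oplus\cdots)_{U}$ in a way compatible with the unconditional basis; in particular $U\simeq U\oplus U$, and more usefully, any complemented subspace $V$ of $U$ satisfies $U\simeq V\oplus W$ for the complementary subspace $W$, and $W$ itself is again (isomorphic to) a space containing a complemented copy of $U$. So first I would fix the identification from Proposition \ref{u}: a Borel map $A\mapsto P_A$ from ${\N}^{\N}$ (the infinite subsets of $\N$) into ${\cal P}(U)$ such that $A\mapsto \overline{\mathrm{span}}(u_n:n\in A)=P_A U$ witnesses $E_{\rm max}\leq_B (SB(U),\simeq)$ restricted to this family, and in fact $E_{\rm max}\leq_B ({\cal P}(U),\simeq)$ by the same argument (since the reduction in \cite{FLR} distinguishes these subspaces precisely by isomorphism type).

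Second, I would manufacture from each projection $P_A$ a complemented embedding of $U$ into $U$ whose range has isomorphism type governed by $P_A U$. The idea: write $U\simeq U_0\oplus U_1$ with $U_0,U_1$ both isomorphic to $U$ and both spanned by (disjoint infinite blocks of) basis vectors, and inside $U_1$ locate a further copy of the whole "subsequence machine", so that to $A$ we associate the embedding $T_A:U\to U$ sending $U$ isomorphically onto $U_0\oplus (P_A U_1)$, say via a fixed isomorphism $U\simeq U\oplus P_A U_1$ — but this requires the target to be a fixed space, so instead one should send $U$ onto $P_A U \oplus Z$ where $Z$ is a fixed complement chosen so that $P_A U\oplus Z\simeq U$ for every infinite $A$. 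This is where Pe\l czy\'nski universality pays off: since $U\simeq (\sum_n U)_U$, one can take $Z\simeq U$ supported on a fixed infinite set of coordinates disjoint from those used by the $P_A$'s, and then $P_A U\oplus Z\simeq P_A U\oplus U\simeq U$ (because $U\simeq U\oplus U$ and $P_A U$ is complemented in $U$, so $P_A U\oplus U$ is a complemented subspace of $U\oplus U\simeq U$ that also contains $U$ complementably — hence $\simeq U$ by Pe\l czy\'nski decomposition). Thus I obtain a genuine embedding $T_A:U\to U$, complemented, with range $\simeq P_A U\oplus U$, together with a projection onto that range; Borelness of $A\mapsto (T_A,Q_A)$ follows as in the proof of Proposition \ref{u} since everything is built coordinatewise from the fixed basis and a fixed isomorphism.

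Third, I would check that this map is a reduction: $T_A\sim T_B$ (as positions in $U$) iff there is an automorphism of $U$ carrying $T_A U$ onto $T_B U$, which forces $T_A U\simeq T_B U$, i.e. $P_A U\oplus U\simeq P_B U\oplus U$; conversely any isomorphism between the ranges can be turned into an automorphism of $U$ realizing it, using again that the ranges are complemented with complements both isomorphic to $U$ (so the isomorphism extends off the ranges). Hence $T_A\sim T_B\iff P_A U\oplus U\simeq P_B U\oplus U$. The last arithmetic point is that $A\mapsto P_A U\oplus U$ is itself a Borel reduction of $E_{\rm max}$: since the family $\{P_A U\}$ already realizes $E_{\rm max}$ and $U$ is universal, adding a fixed summand $U$ preserves the isomorphism relation on this family — one must invoke that in the \cite{FLR} construction the relevant invariant is robust under $\oplus U$ (indeed the spaces there can be taken to already contain $U$ complementably, or one restricts to $A$'s for which $P_A U\supseteq U$ complementably, which is a Borel condition and still carries $E_{\rm max}$). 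Composing, $A\mapsto (T_A,Q_A)$ Borel-reduces $E_{\rm max}$ to $({\rm Emb}_c(U,U),\sim)$, and since $E_{\rm max}$ is the maximum analytic equivalence relation and $({\rm Emb}_c(U,U),\sim)$ is analytic, the complexity is exactly $E_{\rm max}$; the same holds for $({\rm Emb}(U,U),\sim)$ via the Borel surjection onto the first coordinate.

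The main obstacle is the third step: ensuring that appending the fixed summand $U$ does not collapse distinct isomorphism classes among the $P_A U$, i.e. that the $E_{\rm max}$-reduction of \cite{FLR} can be arranged (by restricting to a Borel, still-$E_{\rm max}$-complete set of parameters $A$) so that $P_A U\simeq P_B U$ $\iff$ $P_A U\oplus U\simeq P_B U\oplus U$. I expect this to follow by taking the spaces in the \cite{FLR} construction to be of the form $U\oplus(\text{rigid block})$ from the outset, so that the extra $\oplus U$ is already absorbed; verifying this absorption is the one place genuine care with the \cite{FLR} construction is needed rather than formal Pe\l czy\'nski-decomposition bookkeeping.
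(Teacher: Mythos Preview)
Your third step contains a fatal error. You propose to detect $T_A\sim T_B$ through the isomorphism type of the range $T_A U=P_A U\oplus Z$, but the range of \emph{any} embedding of $U$ into $U$ is, by definition of embedding, isomorphic to $U$; hence $T_A U\simeq U\simeq T_B U$ holds for every $A,B$, and the ``main obstacle'' you isolate (whether $\oplus\, U$ collapses the isomorphism classes of the $P_A U$) is vacuous from the start. Worse, your assertion that ``the complements are both isomorphic to $U$'' is false in your own setup: the complement of $P_A U\oplus Z$ inside $U_0\oplus Z$ is $(I-P_A)U_0=[u_n]_{n\notin A}$, and if these were all isomorphic to $U$ then---by the very extension argument you invoke---one would have $T_A\sim T_B$ for all $A,B$, trivializing the relation you want to be $E_{\rm max}$-complete.

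The invariant that actually governs complemented positions here is the quotient $U/T_A U$, equivalently the complement of the range: if $\tau T_A=T_B$ then $\tau$ induces $U/T_A U\simeq U/T_B U$; conversely, when the complements $C_A,C_B$ are isomorphic, one extends the isomorphism $T_B T_A^{-1}\colon T_A U\to T_B U$ by any isomorphism $C_A\to C_B$ to obtain the required automorphism of $U$. The paper's proof uses precisely this. Writing $U=\bigoplus_{i\ge 0}U_i$ with each $U_i$ a copy of $U$, it sets $Y_A=[u_n^0]_{n\notin A}\oplus\bigoplus_{i\ge 1}U_i$, so that $Y_A\simeq U$ for every $A$ by Pe\l czy\'nski decomposition, while the complement of $Y_A$ is exactly $[u_n^0]_{n\in A}$. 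Then $T_A\sim T_B$ iff $[u_n]_{n\in A}\simeq [u_n]_{n\in B}$, and one concludes by Proposition~\ref{u} with no need to inspect the internals of the \cite{FLR} construction. Your scheme becomes this after swapping $A$ with $\N\setminus A$; the gap is not in the Pe\l czy\'nski bookkeeping but in which side of the decomposition carries the information.
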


\begin{proof}
By Proposition \ref{u} there is a Borel reduction $r$ of $E_{\rm max}$ to isomorphism between  subspaces of $U$ generated by subsequences of the basis $(u_n)_n$ (identified with elements $A$ of $\N^\N$).  Since $U \simeq \ell_2(U) \simeq U \oplus \ell_2(U)$ we may use as basis of $U$ a basis $(v_n)_n$ which is the union of infinitely countably many copies $(u_n^i)_n$ of $(u_n)$, $i=0,1,\ldots$. We denote $U_i=[u_n^i]_{n \in \N}$, $V_0=\oplus_{i \geq 1}U_i$ and see
$U$ as $U=\oplus_{n \in \N} U_n$.
Note that if $A \in \N^\N$, then $Y_A=[u_n^0]_{n \notin A} \oplus V_0$ is a complemented subspace of $U$ which is isomorphic to $U$ by classical properties of Pelczynski's space; therefore embeddings onto
subspaces $Y_A$ and $Y_B$ are in the same position if and only if the quotients $ U/Y_A$ and $ U/Y_B$ are isomorphic, i.e., $[u_n]_{n \in A} \simeq [u_n]_{n \in B}$. From this we deduce that there is a Borel reduction of $E_{\rm max}$ to the relation $\sim'$ on $\N^\N$ defined by
$$A \sim' B \Leftrightarrow {\rm embeddings\ onto}\  Y_A {\rm \ and\ } Y_B {\rm \ have\ the\ same\ position\ in\ } U.$$

Let $Q_A$ be the canonical projection onto $Y_A$ associated to the unconditional basis $(v_n)_n$ and $T_A$ a choice of an embedding of $U$ into $U$
for which $T_A(U)=Y_A$. Since
$$A \sim' B \Leftrightarrow T_A \sim T_B \Leftrightarrow (T_A, Q_A) \sim (T_B, Q_B),$$
it only remains to check that the map from $\N^\N$ to ${\rm Emb}_c (U,U)$ associating to $A$ the pair $(T_A,Q_A)$ may be chosen to be Borel.
 Since $A \mapsto Q_A$ is clearly Borel, let us describe a Borel choice of $A \mapsto T_A$: we extend by linearity the map for which
$$T_A(u_n^i)=u_n^{i+1}, \forall n \in A, \forall i \in \N$$
and
$$T_A(u_n^i)=u_n^i, \forall n \notin A, \forall i \in \N.$$
This is a Borel map for which $T_A$ is an embedding of $U$ onto $Y_A$, for all $A$.
\end{proof}

\subsection{Complex structures}

The set of complex structures on a separable real space $X$ will be identified with the set
$${\cal C}(X):=
\{T\in {\cal B}(X) \ |\ T^2=-Id\}.
$$
Since the multiplication of operators ${\cal B}(X) \times {\cal B}(X) \longrightarrow {\cal B}(X)$ is Borel,
it follows that the set ${\cal C}(X)$ is  a Borel subset of ${\cal B}(X)$ and therefore a standard Borel set.

\begin{de} Let $X$ be a separable real Banach space. The set
$${\cal C}(X)=\{T\in {\cal B}(X) \ |\ T^2=-Id\}$$ seen as a subspace of ${\cal B}(X)$ with the strong operator topology, will be called the {\em standard Borel space of complex structures on $X$}.
\end{de}

\begin{de} Given two elements $J, K$ in ${\cal C}(X)$, we say that $J\simeq K$ if there exists an isomorphism $A\in GL(X)$ such that
$AJA^{-1}=K$. \end{de}

Note that $J \simeq K$ if and only if the associated complex structures are $\C$-linearly isomorphic.

\begin{lem}
The relation $\simeq$ is analytic on the standard Borel space ${\cal C}(X)$.
\end{lem}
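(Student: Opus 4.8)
The plan is to mimic the structure used earlier for the relation $\sim$ on $\mathrm{Emb}(Y,X)$: express $J \simeq K$ as a projection (existential quantification over a Polish parameter space) of a Borel condition. The natural parameter is the witnessing automorphism $A$, which should be coded by its action on a fixed countable dense set, together with a constant bounding $\|A\|$ and $\|A^{-1}\|$. So I would fix $(x_n)_n$ dense in $X$ and work in the Polish space $X^\omega$, or rather in $X^\omega \times X^\omega$ to code both $A$ and $A^{-1}$ simultaneously (this avoids having to assert surjectivity of $A$ separately, which is not obviously a Borel-after-projection condition on its own).

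The key steps: First, define $B \subset X^\omega \times X^\omega \times \mathcal{C}(X)^2$ by declaring $((z_n)_n,(w_n)_n,J,K) \in B$ iff the assignments $x_n \mapsto z_n$ and $x_n \mapsto w_n$ extend to bounded linear operators $A, A'$ on $X$ with $A A' = A' A = Id$ and $AJ = KA$. Second, verify $B$ is Borel by unwinding this into countably many conditions quantifying over $c_{00}(\mathbb{Q})$-combinations and over $n \in \mathbb{N}$, $\epsilon \in \mathbb{Q}^{+*}$, exactly in the style of the proof that $\sim$ is analytic: linearity and a uniform bound give ``$x_n \mapsto z_n$ extends to $A$ with $\|A\| \le K$'' as a Borel (indeed closed, for fixed $K$) condition; $A A' = Id$ becomes $\forall m, n,\, \|A' x_m - x_n\| \le \epsilon \Rightarrow \|A(A'x_m) - x_n\|$ small, using density to approximate $A'x_m$ by some $x_n$; and $AJ = KA$ becomes, for each $m$, $\forall n,\epsilon$, $\|J x_m - x_n\| \le \epsilon \Rightarrow \|A x_n - K z_m\| \le K\epsilon$ (recalling $A x_m = z_m$). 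Third, conclude $J \simeq K \Leftrightarrow \exists ((z_n)_n,(w_n)_n) \in X^\omega \times X^\omega\ ((z_n)_n,(w_n)_n,J,K) \in B$, so $\simeq$ is the projection of a Borel set and hence analytic.

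I would present this fairly briefly, citing the earlier proof (the Proposition stating ${\rm Emb}(Y,X)$ is Borel standard and $\sim$ analytic) as the template and only writing out the one new ingredient, namely the conjugation clause $AJ = KA$ and the simultaneous coding of $A^{-1}$. The main obstacle — really the only subtlety — is ensuring the witnessing automorphism is genuinely invertible, not merely bounded and injective or merely dense-range; the clean fix is to quantify over the pair $(A, A^{-1})$ and impose $A A' = A' A = Id$ as a Borel condition, so that any witness to the existential is automatically a genuine element of $GL(X)$. Everything else is the same routine ``density + linearity + uniform bound'' bookkeeping already carried out in the excerpt, so no new difficulty arises there.
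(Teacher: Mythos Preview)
Your proposal is correct and follows the same overall template as the paper's proof, but you handle invertibility of the witness $A$ differently. You code both $A$ and its inverse as a pair $((z_n)_n,(w_n)_n)$ and impose $AA'=A'A=Id$ as an additional Borel clause. The paper instead codes only $A$, via a single sequence $(y_n)_n=(Ax_n)_n$, and obtains surjectivity directly: condition (i) is a two-sided Lipschitz bound making $A$ an isomorphism onto its (closed) image, and condition (ii) asserts that every $x_n$ lies in the closed linear span of the $y_i$, so the image is dense and hence all of $X$. In other words, the combination you flagged as problematic---``bi-Lipschitz plus dense range''---is exactly what the paper uses, and it suffices because the bi-Lipschitz bound forces the range to be closed. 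Your two-sequence coding buys a slightly more self-contained argument (no need to argue closed $+$ dense $=$ onto), at the cost of an extra Polish parameter and an extra clause; the paper's one-sequence coding is a bit leaner. Either way the conjugation clause $AJ=KA$ is handled identically.
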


\begin{proof} We fix a countable family $(x_n)_n$ with dense linear span in $X$, and note that
an isomorphism $A$ on $X$ may be coded by a family $(y_n)_n \in X^{\omega}$ with dense linear span
and so that the map $x_n \mapsto y_n$ extends to an isomorphism onto its image.
The relation $AJA^{-1}=K$ is then equivalent to
$AJ  x_n= K y_n$ for all $n$, which we may reformulate, using an upper bound $M$ for $\|A\|$ and $\|K\|$, in terms of approximations of
$K(\sum_i \lambda_i x_i)$ when $\sum_i \lambda_i x_i$ approximates $y_n$.
We deduce the following characterization: $J \simeq K$ if and only if there exists
$(y_n)_n \in X^{\omega}$ such that
\begin{itemize}
\item[(i)] $\exists k \in \N, \forall (\lambda_n) \in c_{00}(\Q),
k^{-1}\|\sum_n \lambda_n x_n\| \leq \|\sum_n \lambda_n y_n\| \leq k \|\sum_n \lambda_n x_n\|,$
\item[(ii)] $\forall n \in \N, \forall q \in \Q^{+*}, \exists (\lambda_i) \in c_{00}(\Q):\
\| x_n - \sum_i \lambda_i y_i\| <q,$
\item[(iii)] $\exists M \in \N\ \forall q \in \Q^{+*}, \forall  (\lambda_i), (\mu_i) \in c_{00}(\Q) ,
 \|K(\sum_i \lambda_i x_i)\| \leq M \|\sum_i \lambda_i x_i\|$

{\rm and\ } $\forall n \in \N,
\big( ( \| \sum_i \lambda_i x_i - J x_n\| \leq q) \wedge ( \|y_n-\sum_i \mu_i x_i\| \leq q)\big)$

$ \Rightarrow \| \sum_i \lambda_i y_i -K(\sum_i \mu_i x_i) \| \leq 2M q $.
\end{itemize}
Since the set of $((y_n), J, K)$ satisfying (i)-(ii)-(iii) is a Borel subset of the space $(X,\|.\|)^{\omega} \times {\cal C}(X)^2$, it follows that $\simeq$ is analytic on ${\cal C}(X)$.
\end{proof}

We now relate the Borel standard space ${\mathcal C}(X)$ of complex structures to a Borel standard space of complemented subspaces as follows. Recall that if $J$ is a complex structure on a real space $X$, then the space
$$X_J=\{(x,Jx), x \in X\}$$ is a complemented, $\C$-linear subspace of the complexification $X \oplus_{\C} X$ of $X$, which is $\C$-linearly isomorphic to the complex structure $X^J$ induced by $J$. Actually, $X_J$ is the image of the $\C$-linear projection $P_J$ defined on $X \oplus_{\C} X$ by
$$P_J(x,y)=\frac{1}{2}(x-Jy,Jx+y).$$
It is clear that the map $J \mapsto P_J$ is a Borel isomorphism between ${\mathcal C}(X)$ and
the Borel subspace $\{P_J, J \in {\mathcal C}(X)\}$ of ${\mathcal P}(X \oplus_{\C} X)$, and by definition,
$$J \simeq K \Leftrightarrow X^J \simeq X^K \Leftrightarrow P_J \simeq P_K.$$
In other words our definition of complexity of isomorphism bewteen complex structures coincide with the natural one induced by isomorphism of complemented subspaces of $X \oplus_\C X$ on the Borel set $\{P_J, J \in {\mathcal C}(X)\}$.  Therefore also the complexity of isomorphism of complex structures on $X$ will be $\leq_B$-below the complexity of isomorphism on $SB(X \oplus_\C X)$ (resp. ${\mathcal P}(X \oplus_\C X)$), i.e., between (resp. complemented) subspaces of $X \oplus_\C X$.

This line of ideas initiated with a result of N.J. Kalton proving that if $X \oplus_\C X$ is primary then $X$ admits unique complex structure, and is further exemplified in \cite{Cbis}.

\

Let us note here that since the complexity of linear isomorphism between separable spaces is $E_{\rm max}$, it is natural to ask if such maximum complexity may be achieved by isomorphism between different complex structures on a given real Banach space $X$.

On the other hand,  the relation of isomorphism between complex structures is the orbit relation of the action $(U,T) \mapsto UTU^{-1}$ of the group $GL(X)$ on the standard Borel space ${\cal C}(X)$. So although $GL(X)$ is not a Polish group, one might hope
to prove that isomorphism between complex structures on $X$ is reducible to an orbit relation induced by action of some Polish group.
Our final result is that, similarly to what happens for positions, this is not so:

\begin{thm} There exists a separable real space $X$ such that $E_1$ is Borel reducible to  linear isomorphism between complex structures on $X$. In particular, linear isomorphism between complex structures on  $X$ is not Borel reducible to the orbit equivalence relation induced by a Polish group action on a Polish space.\end{thm}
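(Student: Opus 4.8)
\emph{Approach.} The plan is to mimic, for complex structures, the route followed for positions: first produce a reduction of $(2^\omega, E_0)$ into the complex structures on a single "building block" space $Z$ with quantitative control of the witnessing automorphisms (a two-sided bound depending only on the eventual-agreement index), and then amplify it to $E_1$ by passing to an $\ell_p$-sum (or $c_0$-sum) $X = \ell_p(Z)$, pasting block automorphisms on the positive side and using a norm-blowup on the negative side. The natural candidate for $Z$ is a subspace of $L_p$, $1\le p<2$, of the type used by Anisca in \cite{A}: there one has an explicit continuum $(J_\alpha)_{\alpha\in 2^\omega}$ of complex structures, indexed so that $\alpha \mathrel{E_0} \beta$ forces $J_\alpha \simeq J_\beta$ via an isomorphism whose norm and inverse-norm are bounded by a constant $c_k$ depending only on the first index $k$ of agreement, while if $\alpha,\beta$ disagree at some coordinate $i\ge k$ then any $A$ with $AJ_\alpha A^{-1}=J_\beta$ must have $\|A\|\ge d_k$ with $d_k\to\infty$. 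This is exactly the analogue, for the conjugation action $A\mapsto AJA^{-1}$, of hypotheses (a)–(c) of Proposition \ref{E1}.

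\emph{Key steps, in order.} (1) Recall from \cite{A} the construction of the subspace $Z\subset L_p$ and of the family $J_\alpha$; isolate the uniform two-sided quantitative estimates in the form of (b) and (c) above. (2) Verify Borelness: the map $\alpha\mapsto J_\alpha$ is continuous into $({\cal C}(Z),\text{SOT})$, so it is a genuine Borel reduction of $(2^\omega,E_0)$ into $({\cal C}(Z),\simeq)$. (3) Set $X=\ell_p(Z)$ (with $p$ the exponent of the ambient $L_p$, and $Z$ chosen so that $\ell_p(Z)$ is again a subspace of $L_p$, which it is since $L_p$ contains $\ell_p(L_p)$ complementably). To $\alpha=(\alpha_n)_n\in(2^\omega)^\omega$ associate the diagonal complex structure $J(\alpha)=\bigoplus_k J_{(\alpha_n(k))_n}$ on the $k$-th summand; this is a bounded operator (uniform bound on $\|J_\alpha\|$), squares to $-\mathrm{Id}$, and $\alpha\mapsto J(\alpha)$ is Borel. (4) Positive direction: if $\alpha \mathrel{E_1}\beta$, all but finitely many coordinate-sequences agree and the rest are $E_0$-related with a uniform index $m$; paste the block isomorphisms $A_k$ (each with $\max\{\|A_k\|,\|A_k^{-1}\|\}\le c_m$) into a single $A\in GL(X)$ conjugating $J(\alpha)$ to $J(\beta)$. (5) Negative direction: if $\neg(\alpha\mathrel{E_1}\beta)$, pick infinitely many $k$ with $\alpha_k\ne\beta_k$ and a disagreement coordinate $i_k$; restricting a hypothetical conjugating $A$ to the $i_k$-th summand and composing with the coordinate projection yields, via (c), an operator of norm $\ge d_k\to\infty$, contradicting $\|A\|<\infty$. (This is the verbatim analogue of the proof of Proposition \ref{E1}, with "$Tr(\alpha)=r(\beta)$" replaced by "$AJA^{-1}=K$".) Conclude $E_1\le_B ({\cal C}(X),\simeq)$, hence by \cite{KL} this relation is not Borel reducible to any Polish-group orbit relation.

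\emph{Main obstacle.} The delicate point is Step (1): extracting from \cite{A} the \emph{two-sided, index-controlled} estimates. Anisca's argument gives a continuum of pairwise non-isomorphic structures, but to run the $E_1$-amplification one needs (i) on the positive side, isomorphisms whose norm \emph{and} inverse norm are bounded by a constant depending only on the agreement index $m$ — so the pasting produces a genuine automorphism of $\ell_p(X)$ — and (ii) on the negative side, a \emph{quantitative} lower bound $d_k$ forcing $\|A\|\ge d_k$ when the disagreement is at coordinate $\ge k$, rather than mere non-isomorphism. The former should follow by inspecting the finite-dimensional perturbations that implement $J_\alpha\rightsquigarrow J_\beta$ when $\alpha\mathrel{E_0^m}\beta$ (only finitely many "blocks" are altered, and their modification is governed by $m$); the latter should come from quantifying the obstruction in \cite{A} that distinguishes $J_\alpha$ from $J_\beta$ — typically an extension-type or operator-ideal estimate growing with the block index. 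Packaging the example of \cite{A} so that it satisfies hypotheses (a)–(c) of Proposition \ref{E1} (suitably transcribed to the conjugation action) is where the real work lies; once that is done, Steps (3)–(5) are a routine transcription of the proof of Proposition \ref{E1}.
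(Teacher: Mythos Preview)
Your high-level plan (controlled $E_0$-reduction on a building block, then $\ell_p$-amplification to $E_1$) matches the paper's, and you correctly identify Step (1) as nontrivial. But Step (5), which you call a ``verbatim analogue'' of the negative direction of Proposition \ref{E1}, does not transcribe routinely to the conjugation action, and this is where the real content lies. In the position setting, $TR(\beta)=R(\alpha)$ with diagonal $R(\alpha),R(\beta)$ forces $P_kT|_{X_{i_k}}\cdot r(\beta_n(i_k)_n)=r(\alpha_n(i_k)_n)$; the right-hand side is a fixed embedding, so $P_kT|_{X_{i_k}}$ cannot be small, and (c) bites. For conjugation, from $AJ(\alpha)=J(\beta)A$ you get for $B:=P_{i_k}A|_{Z_{i_k}}$ only the intertwining relation $B\,J_{(\alpha_n(i_k))_n}=J_{(\beta_n(i_k))_n}\,B$, i.e.\ $B$ is $\C$-linear between the two structures. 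Nothing prevents $B$ from being tiny (even $0$): a global automorphism of $\ell_p(Z)$ may send the $i_k$-th copy of $Z$ almost entirely into other copies, and since every copy contains the \emph{same} blocks $X_m$, it can try to match an $X_m$ carrying one complex structure in copy $i_k$ with an $X_m$ carrying a compatible structure elsewhere. Your hypothesis (c), stated for invertible $A$, gives no lower bound on such $B$.

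The paper is designed around exactly this obstruction: it does \emph{not} take $X=\ell_p(Z)$ for a single $Z$, but a doubly indexed family $X_{k,m}$ whose exponents $p_1(k,m)>\cdots>p_5(k,m)$ and scales $N(k,m)$ depend on both indices (conditions (\ref{main_p_condition1})--(\ref{N_condition})), so that no two distinct blocks share an exponent. For a putative isomorphism $T:X(\alpha)\to X(\beta)$ and a fixed $(k,m)$ with $\alpha_k(m)=0,\beta_k(m)=1$, upper/lower $p_j$-estimates plus a gliding hump force $T|_{X_{k,m}}$ (after passing to a subspace and perturbing) to land, up to controllable error, in the finitely many blocks $Y_{k',m'}$ with $(k',m')\le(k,m)$; Proposition \ref{pr-one} handles the diagonal piece $P_{k,m}T_0$, and the finitely many earlier pieces are controlled by (\ref{main_p_condition2}). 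Only after this localisation does one obtain the norm blow-up $\|T^{-1}\|\gtrsim k$. So the amplification step is not a routine transcription of Proposition \ref{E1}; it requires either the paper's double-indexing, or an additional argument (absent from your sketch) explaining why an automorphism of $\ell_p(Z)$ cannot shuffle identical blocks across copies.
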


The proof of the theorem is more technical than the previous ones and is given in the next section. It is striking that the level of complexity $E_1$ may be obtained for  $\C$-linear isomorphism between spaces which are all  $\R$-linearly isometric. This means that in some sense $\R$-linear and $\C$-linear structures may be quite far apart on a Banach space.
Let us note the following question:

\begin{quest} Find a separable real Banach space such that $E_{\rm max}$ is Borel reducible to linear isomorphism between complex structures of $X$.\end{quest}

\section{A reduction result for complex structures}

\noindent Inside $L_p$, with $1\le p <2$, we will construct a (complex) Banach space of the form:
\begin{equation*}
X=\left( \oplus_{k\ge 1}\, X_{k} \right)_{\ell_p} \ {\rm with~each} \  X_k=\left( \oplus_{m\ge 1}\, X_{k,m} \right)_{\ell_p}, \ {\rm for~all} \ k\ge 1.
\end{equation*}
Given $\alpha=(\alpha_k)_k \in (2^{\omega})^{\omega}$, with $\alpha_k=(\alpha_k(m))_m \in 2^{\omega}$, we define for each $k$
$$
X_k(\alpha)=\left( \oplus_{m\ge 1}\, Y_{k,m} \right)_{\ell_p}
$$
where
$$
Y_{k,m}=
\left\{
{\renewcommand{\arraystretch}{1.4}
\begin{array}{l}
X_{k,m}, \ \ {\rm when}\ \ \alpha_k(m)=0\\
{\overline X_{k, m}}, \ \ {\rm when}\ \ \alpha_k(m)=1.
\end{array}}
\right.
$$
Then
$$
X(\alpha)=\left( \oplus_{k\ge 1}\ X_k(\alpha) \right)_{\ell_p}
$$
gives a complex structure on $X$, treated as a real space.

Let $T_\alpha \in {\cal B}(X)$ be the element of the standard Borel space of complex structures on $X$ associated
to $X(\alpha)$. We may write $T_\alpha$ as
$$T_\alpha=(T_k(\alpha))_k,$$
where
$$T_k(\alpha)=(T_{k,m}(\alpha))_m$$
with $T_{k,m}(\alpha)$ is defined on $X_{k,m}$ by
$$
T_{k,m}(\alpha)(x)=
\left\{
{\renewcommand{\arraystretch}{1.4}
\begin{array}{l}
ix, \ \ {\rm when}\ \ \alpha_k(m)=0\\
-ix, \ \ {\rm when}\ \ \alpha_k(m)=1.
\end{array}}
\right.
$$

It is straightforward that the map $\alpha \mapsto T_\alpha$ is Borel from
$(2^{\omega})^{\omega}$ into ${\mathcal B}(X)$ and therefore into the standard Borel space of complex structures on $X$.

\

The claim is that, for a suitable $X$ as above, we have $X(\alpha) \simeq X(\beta)$ (equivalently $T_\alpha \simeq T_\beta$) if and only if $\alpha E_1 \beta$.
For such $X$, $E_1$ is therefore Borel reducible to linear isomorphism between complex structures on $X$. In particular

\begin{thm}
\label{complex_result} The equivalence relation $E_1$ is Borel reducible to  linear isomorphism between complex structures on the subspace $X$ of $L_p$. \end{thm}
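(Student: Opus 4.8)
The plan is to follow the blueprint already laid down for positions in Proposition~\ref{E1}: build the space $X$ as an $\ell_p$-sum of ``blocks'' $X_k$, each of which is itself an $\ell_p$-sum of pieces $X_{k,m}$ chosen so that conjugation of complex structures on a single piece behaves like the reduction of $(2^\omega,E_0)$ that underlies Anisca's continuum of complex structures in $L_p$ \cite{A}. Concretely, I would first isolate from \cite{A} a sequence of (isometric copies of) spaces $X_{k,m}\subset L_p$ together with the two complex structures $+i$ and $-i$ (i.e.\ $X_{k,m}$ versus $\overline{X_{k,m}}$) having the following \emph{quantitative} dichotomy: for each fixed $m$, an isomorphism between the $\alpha$-twisted and $\beta$-twisted versions of $\bigl(\oplus_{m}X_{k,m}\bigr)_{\ell_p}$ that differs in coordinate $m$ must have norm growing with $m$, while if $\alpha_k$ and $\beta_k$ eventually agree past index $m$ the two twisted blocks are $c_m$-isomorphic for a constant $c_m$ depending only on $m$. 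This is exactly the pair of estimates (b)--(c) of Proposition~\ref{E1} transported from the ``position'' setting to the ``complex structure'' setting, and it is where the actual content of \cite{A} is invoked.

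Granting that, the reduction $\alpha\mapsto T_\alpha$ is the obvious one described in the excerpt, and I would argue the two directions separately. For the easy direction, if $\alpha E_1\beta$ then there is $m_0$ with $\alpha_k=\beta_k$ for all $k\ge m_0$, so $X_k(\alpha)=X_k(\beta)$ as complex spaces for $k\ge m_0$ (the identity works), while for the finitely many $k<m_0$ the sequences $\alpha_k$ and $\beta_k$ differ in at most finitely many coordinates; using the $c_m$-isomorphism estimate we obtain, for each such $k$, a $\C$-linear isomorphism $A_k\colon X_k(\alpha)\to X_k(\beta)$ of norm bounded by a constant depending only on $m_0$, and then $A=(\oplus_{k<m_0}A_k)\oplus(\oplus_{k\ge m_0}\mathrm{Id})$ is a bounded $\C$-linear isomorphism $X(\alpha)\to X(\beta)$, i.e.\ $T_\alpha\simeq T_\beta$. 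For the hard direction, suppose $\alpha$ and $\beta$ are not $E_1$-related and $A\colon X(\alpha)\to X(\beta)$ is a $\C$-linear isomorphism; pick an infinite set $K$ of indices $k$ with $\alpha_k\ne\beta_k$, and for each $k\in K$ a coordinate $i_k$ with $\alpha_k(i_k)\ne\beta_k(i_k)$, say $\alpha_k(i_k)=0$ and $\beta_k(i_k)=1$ after passing to a further infinite subset. Composing $A$ restricted to the $X_{k,i_k}$-summand of $X(\alpha)$ with the canonical $\C$-linear projection of $X(\beta)$ onto its $X_{k,i_k}$-summand gives, by the lower estimate (c)-type bound from \cite{A}, an operator of norm at least $d_k$, forcing $\|A\|\ge\sup_{k\in K}d_k=\infty$, a contradiction. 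Hence $T_\alpha\simeq T_\beta\iff\alpha E_1\beta$, and since $\alpha\mapsto T_\alpha$ is Borel this is the desired reduction.

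The main obstacle is entirely in the first step: extracting from \cite{A} the precise quantitative behaviour of $\C$-linear isomorphisms between the twisted sums $\bigl(\oplus_m Y_{k,m}\bigr)_{\ell_p}$, and in particular arranging that the ``isomorphism must be large if it disagrees in a high coordinate'' estimate is \emph{uniform} in the way Proposition~\ref{E1} requires. In \cite{A} the spaces $X_{k,m}$ are built from carefully chosen subspaces of $L_p$ whose complex structures are distinguished by a local (finite-dimensional) invariant, and one has to verify both that conjugating a $\C$-linear isomorphism down to a single block cannot destroy this invariant for free (the lower bound), and that a change confined to finitely many blocks can be absorbed at bounded cost (the upper bound). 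A secondary, more routine, point is checking that the whole construction can be carried out \emph{inside} $L_p$ for $1\le p<2$ — this uses that $\ell_p$-sums of subspaces of $L_p$ (with appropriate independence of the pieces) again embed isometrically into $L_p$, together with the fact that passing to the complex conjugate of a real-isometric copy costs nothing as a real space, which is what makes all the $X(\alpha)$ coincide as real Banach spaces. Once these are in place, the combinatorics of $E_1$ versus $E_0$-on-coordinates is identical to the proof of Proposition~\ref{E1} and requires no new idea.
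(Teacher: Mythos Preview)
There are two genuine gaps, one in each direction, and both come from transplanting the combinatorics of Proposition~\ref{E1} too literally.

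\textbf{Easy direction.} Your claim that ``for the finitely many $k<m_0$ the sequences $\alpha_k$ and $\beta_k$ differ in at most finitely many coordinates'' is false: $\alpha E_1\beta$ only says $\alpha_k=\beta_k$ for $k\ge m_0$, and for $k<m_0$ the sequences $\alpha_k,\beta_k\in 2^\omega$ may differ in \emph{every} coordinate. So a $c_m$-type estimate (depending on the first index of disagreement within a fixed $k$) is useless here. The paper's fix is to design the pieces so that the explicit $\C$-linear isomorphism $X_{k,m}\to\overline{X_{k,m}}$ has norm at most $2(k+1)^{16}$, a bound depending on $k$ alone; this is arranged by tying the parameters via $N(k,m)^{\eta(k,m)}\le k+1$. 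Then for each $k<m_0$ one gets an isomorphism $X_k(\alpha)\to X_k(\beta)$ of norm $\le 2(k+1)^{16}\le 2m_0^{16}$ regardless of how many $m$'s differ, and these glue. Note this is \emph{not} the transposed indexing trick of Proposition~\ref{E1}; the roles of $k$ and $m$ are kept straight, and the uniformity is engineered into the spaces instead.

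\textbf{Hard direction.} The step ``composing $A|_{X_{k,i_k}}$ with the projection of $X(\beta)$ onto its $X_{k,i_k}$-summand gives an operator of norm at least $d_k$'' does not follow from anything: nothing forces this composition to be large, or even nonzero. In Proposition~\ref{E1} the analogous step worked because the equation $R(\alpha)=TR(\beta)$ between embeddings forces a coordinatewise identity; an arbitrary $\C$-linear isomorphism $A:X(\alpha)\to X(\beta)$ obeys no such constraint and may send $X_{k,i_k}$ anywhere. The paper's argument runs essentially opposite to yours. Fixing $k$ in the infinite ``bad'' set and the corresponding $m$, one first passes (by gliding-hump and perturbation, using the careful choice of the $p_j(k',m')$ and the total order on pairs $(k',m')$) to a subspace of $X_{k,m}$ on which $A$ is approximated by a block-diagonal $T_0$ with $R_{k,m}T_0=0$. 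Then Anisca's estimate (Proposition~\ref{pr-one}) shows $P_{k,m}T_0$ is \emph{small} on a suitable vector $y$ (precisely because it lands in $\overline{X_{k,m}}$), while the finitely many remaining components $P_{k',m'}T_0$ with $(k',m')<(k,m)$ are controlled by the upper $\ell_q$-estimates built into the parameters. Summing, $\|T_0y\|$ is too small unless $\|A^{-1}\|\gtrsim k$, and letting $k\to\infty$ over the bad set gives the contradiction. So the lower bound you need is not a bound on one projected piece, but a global smallness-of-all-pieces argument that requires the full interlocking choice of $p_j(k,m)$, $\eta(k,m)$, and $N(k,m)$.
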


\

Let $k\ge 1, m \ge 1$ be arbitrarily fixed. The ingredient space $X_{k,m}$ will be constructed as a subspace of $\ell_{p_1}\oplus_p\ldots \oplus_p \ell_{p_5}$, for some suitable constants
$2>p_1> p_2> \ldots > p_5> p$ depending on $k$ and $m$. Furthermore, $X_{k,m}$ will admit a 1-unconditional decomposition into 2-dimensional spaces $X_{k,m}=\overline {\rm span}~[Z_t]_{t\ge 1}$. More specifically, if we denote by $\{f_{j,t}\}_{t\ge 1}$ the natural basis of $\ell_{p_j}$ ($j=1,
\ldots, 5$), we define the vectors $x_t$ and $y_t$ spanning $Z_t$
by
\begin{equation}
\label{basis}
{\renewcommand{\arraystretch}{1.5}
\begin{array}{lllllll}
x_t & = & f_{1,t} & & +\gamma_1 f_{3,t} & +  \gamma_2 f_{4,t}
& + \ \gamma_3 f_{5,t} \\
y_t & = &  & f_{2,t} & &  + \gamma_2 f_{4,t}
& + i \,\gamma_3 f_{5,t}
\end{array}}
\end{equation}
for all $t\ge 1$. The constants $\gamma_1, \gamma_2, \gamma_3$ will depend on $\eta:= 1/p_2-1/p_1= 1/p_3-1/p_2= 1/p_4-1/p_3=1/p_5-1/p_4$ and on a positive integer $N$ that is chosen according to $k$ and $m$. More precisely, $\gamma_1= N^{-2\eta}, \gamma_2= N^{-8\eta}$ and $\gamma_3= N^{-24 \eta}$.

Under these definitions it was proved in \cite{A} (Corollary 2.2) that, in terms of the Banach-Mazur distance, we have
\begin{equation*}
d(X_{k, m}, {\overline X}_{k,m}) \ge \frac{1}{100} N^{\eta}.
\end{equation*}

This was the consequence of the following fact regarding the behaviour of linear operators acting from $X_{k,m}$ to ${\overline X}_{k,m}$, which will be also useful to us later in the sequence.

First, let $T: W\longrightarrow V$ be a bounded linear operator with $W, V$ Banach spaces having finite dimensional decompositions $\{W_t\}_t$ and $\{V_t\}_t$ respectively. We say that $T$ is {\it block-diagonal} with respect to $\{W_t\}_t$ and $\{V_t\}_t$ if for every $t$ there exists a finite set $B_t \subset \{1, 2, \ldots\}$ such that
$$
\left\{
{\renewcommand{\arraystretch}{1.4}
\begin{array}{l}
{\rm max}\, B_s<{\rm min} B_t\ \ \ \quad \forall s, t \in \{1, 2, \ldots\} ~~ {\rm with}~~ s<t,\\
{\rm supp}\, Tw\subset B_t\ \ \ \quad \quad \forall w\in W_t,\ \forall t\in \{1, 2, \ldots\}
\end{array}}
\right.
$$
where ${\rm supp}\, Tw$ is taken with respect to the decomposition $\{V_t\}_t$.

\begin{pr} (\cite{A}){\ }\
\label{pr-one}
Let $I\subset \{1, 2, \ldots\}$ be an infinite set and let $Y$
 be the subspace of $X_{k, m}$ defined by $Y=\overline {\rm span}
[Z_t]_{t\in I}$.  Consider $T:Y \longrightarrow {\overline
X}_{k, m}$ a block-diagonal operator (with respect to $\{Z_t\}_{t \in I}$
and $\{\overline {Z_t}\}_{t\geq 1}$) with $\|T\|\leq 1$.  Then
\begin{description}
\item[(i)]
There exists a finite set $J\subset I$ such that
\begin{equation}
\label{basis_equation}
{\rm max} \{\|Tx_t\|, \|Ty_t\|\}\leq 24 N^{-\eta},
\quad for\ all \ t\in I\setminus J.
\end{equation}
\item[(ii)] Let $\{I_l\}_{l\geq 1}$ be a family of disjoint subsets of
$I$ with the property that $|I_l|=N$, for all $l\geq1$.
Let $\widetilde {x_l}=\sum_{t\in {I_l}} a_l(t)x_t$,
$\widetilde {y_l}=\sum_{t\in {I_l}} a_l(t)y_t$ satisfy
$\sum_{t\in {I_l}} |a_l(t)|^{p_2}=1$, for $l= 1, 2, \ldots$.  Then
there exists a finite subset $J'\subset \{1,2,...\}$ such that
\begin{equation}
\label{block_estimate}
{\rm max} \{\|T\widetilde{x_l}\|, \|T\widetilde{y_l}\|\}
\leq 70 N^{-\eta},
\quad for\ all \ l\in \{1,2,...\}\setminus J'.
\end{equation}
\end{description}
\end{pr}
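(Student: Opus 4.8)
The plan is to work entirely inside the ambient space $E=\ell_{p_1}\oplus_p\cdots\oplus_p\ell_{p_5}$, where a vector $w$ splits as $w=(w^{(1)},\dots,w^{(5)})$ with $\|w\|^p=\sum_{j=1}^5\|w^{(j)}\|_{p_j}^p$. Using that $T$ is block-diagonal I would, for each $t\in I$, write $Tx_t=\sum_{s\in B_t}(b_s\overline{x}_s+c_s\overline{y}_s)$ and $Ty_t=\sum_{s\in B_t}(b_s'\overline{x}_s+c_s'\overline{y}_s)$ with complex coefficients, and read off the five levels; the only feature distinguishing the range $\overline{X}_{k,m}$ from $X_{k,m}$ is that the level-$5$ part of $Tx_t$ is $\gamma_3\sum_s(b_s-i\,c_s)f_{5,s}$, the sign $-i$ coming from the conjugate structure of $\overline{y}_s$. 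This is the crux of the whole construction: the four lower levels are phase-blind, so that the $\C$-linear identity-type map $x_t\mapsto \overline x_t,\ y_t\mapsto\overline y_t$ would have norm one and violate the conclusion \emph{were the target $X_{k,m}$ rather than $\overline X_{k,m}$}. Thus a norm-one $\C$-linear $T$ into the conjugate space must respect at level $5$ a phase that the twist has reversed; this is exactly what the (antilinear) conjugation evades, which is why $\C$-linearity is used.

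The elementary tools are the norm comparisons $\|a\|_q\le\|a\|_p$ for $q\ge p$ and $\|a\|_p\le M^{1/p-1/q}\|a\|_q$ for $M$-term supports, together with the tuning $\gamma_1=N^{-2\eta}$, $\gamma_2=N^{-8\eta}$, $\gamma_3=N^{-24\eta}$, arranged so that across a block of $N$ coordinates the level-$1$ and level-$3$ masses become comparable while levels $4,5$ stay strictly smaller. For a single image these comparisons already show that large norm can sit at level $5$ only at the cost of a very large support $|B_t|$, so the substance of (i) is collective. To prove (i) I would argue by contradiction: assuming $\max\{\|Tx_t\|,\|Ty_t\|\}>24N^{-\eta}$ for infinitely many $t$, I extract a long block $t_1<\cdots<t_L$ and feed $T$ the test vector $w=\sum_j(x_{t_j}+i\,y_{t_j})$, whose level-$5$ mass cancels in the domain (so $\|w\|$ is controlled by levels $1$--$3$). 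The conjugate twist turns this cancellation into reinforcement in the range, and since $p<p_5$ the $\ell_{p_5}$-norm of the accumulated, disjointly supported vector $Tw$ grows like $L^{1/p_5}$, fast enough to beat $\gamma_3=N^{-24\eta}$ once $L$ exceeds a fixed power of $N$; this forces $\|Tw\|>\|w\|$, contradicting $\|T\|\le1$. The finite set $J$ absorbs the indices before the estimate engages, and the constant $24$ is what the five-level bookkeeping produces.

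For (ii) I would deduce the block estimate from (i) by interpolating two bounds on $\|T\widetilde{x}_l\|$ (and likewise $\|T\widetilde{y}_l\|$). On the one hand $\|T\widetilde{x}_l\|\le\|\widetilde{x}_l\|$, which is already of order $N^{-\eta}$ precisely when the coefficients $a_l(t)$ are spread out; on the other hand, since the supports of the $Tx_t$ are disjoint and $\xi\mapsto\xi^{p/p_j}$ is subadditive ($p/p_j<1$), one gets $\|T\widetilde{x}_l\|^p\le\sum_{t\in I_l}|a_l(t)|^p\,\|Tx_t\|^p\le(24N^{-\eta})^p\|a_l\|_p^p$ once $l$ is large enough that (i) applies to every $t\in I_l$, which is good precisely when $a_l$ is concentrated. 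Taking the better of the two bounds over all admissible coefficient patterns yields the uniform $70N^{-\eta}$, the loss from $24$ to $70$ being exactly the slack of this interpolation; only finitely many $l$ are excluded, giving $J'$.

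The main obstacle I expect is the collective phase-coherence at level $5$: one must verify that the mis-phased level-$5$ contributions of the individual images genuinely accumulate along the block rather than interfere destructively, and simultaneously control the five competing level-norms, each carrying its own power of $N$, so as to rule out that one of the phase-blind levels $1$--$4$ silently stores the mass the twist is trying to expose. Keeping the constants $24$ and $70$ honest through this multi-level accounting, rather than any single inequality, is where the real work lies; the rest is the routine $\ell_p$-arithmetic recorded above.
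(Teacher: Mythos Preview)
The paper does not prove this proposition at all; it is quoted verbatim from \cite{A} and used as a black box. So there is no ``paper's own proof'' to compare against, and your plan must be judged on its own merits.

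For part (i), the contradiction mechanism you propose does not close. You assume $\max\{\|Tx_t\|,\|Ty_t\|\}>24N^{-\eta}$ for infinitely many $t$ and test with $w=\sum_j(x_{t_j}+iy_{t_j})$, whose level-$5$ part vanishes in the domain. You then assert that ``the conjugate twist turns this cancellation into reinforcement in the range'' and that $\|Tw\|$ grows like $L^{1/p_5}$. But the hypothesis gives you no control over \emph{which} level carries the mass of $Tx_t$ or $Ty_t$: writing $Tx_t=\sum_{s\in B_t}(\beta_s x_s+\gamma_s y_s)$, the largeness of $\|Tx_t\|$ could come entirely from $\|\beta\|_{p_1}$ or $\|\gamma\|_{p_2}$, with the level-$5$ component $\gamma_3\|\beta+i\gamma\|_{p_5}$ negligible. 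In that case $Tw$ grows like $L^{1/p_1}$ or $L^{1/p_2}$, exactly the same rate as $\|w\|$, and no contradiction arises. You flag precisely this as ``the main obstacle'', but the plan offers no device to force the mass down to level $5$. The argument in \cite{A} does not proceed by a single test vector; it runs a cascade through the levels, using blocks of carefully chosen lengths (powers of $N$ tuned to the weights $\gamma_1,\gamma_2,\gamma_3$) to kill the contribution of each level in turn before the conjugate twist is finally exposed at level $5$.

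For part (ii), your interpolation does not produce the stated bound. Your first estimate $\|T\widetilde{x}_l\|\le 24N^{-\eta}\|a_l\|_p$ is correct for disjoint supports, but since $p<p_2$ and $|I_l|=N$ one has $\|a_l\|_p\le N^{1/p-1/p_2}$, and this exponent exceeds $1/p_5-1/p_2=3\eta$; thus for spread-out coefficients the bound is of order $N^{-\eta}\cdot N^{3\eta}=N^{2\eta}$ or worse, not $N^{-\eta}$. Your second estimate $\|T\widetilde{x}_l\|\le\|\widetilde{x}_l\|$ is of order $\|a_l\|_{p_1}$, which for coefficients uniform on $M\le N$ points equals $M^{-\eta}\ge N^{-\eta}$, with equality only at $M=N$. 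The minimum of the two bounds over intermediate $M$ is strictly larger than a constant times $N^{-\eta}$, so the interpolation fails to reach $70N^{-\eta}$. Part (ii) in \cite{A} is not a corollary of (i); it repeats the level-by-level analysis with the $N$-blocks $\widetilde{x}_l,\widetilde{y}_l$ playing the role that $x_t,y_t$ played before, which is why the normalisation $\sum|a_l(t)|^{p_2}=1$ and the block size $|I_l|=N$ are imposed.
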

Notice that $x_1, y_1, x_2, y_2,\ldots$ form a Schauder basis for both $X_{k,m}$ and ${\overline
X}_{k, m}$. Normalized blocks of this basis satisfy an upper $p_5$-estimate, which in turn implies that the basis is shrinking and hence $w$-null.
Given now any bounded linear operator $T:Y=\overline {\rm span} [Z_t]_{t\in I} \longrightarrow {\overline X}_{k, m}$, with $I\subset \{1, 2, \ldots\}$ infinite, it is easy to see that a classical gliding-hump argument allows us to approximate $T$ on an infinite dimensional subspace $Y_0=\overline {\rm span} [Z_t]_{t\in I_0} \subset Y$ by a block-diagonal operator $T_0: Y_0 \longrightarrow {\overline X}_{k, m}$. We will use this fact later in the sequel.

We are now going into the specific details of choosing the indices $p_1=p_1(k,m)> p_2=p_2(k,m)> \ldots > p_5=p_5(k,m)$ corresponding to each of the ingredient spaces $X_{k, m}$, for $k\ge 1$ and $m\ge 1$, as well as the positive numbers $N=N(k,m)$ which are part of the definition (\ref{basis}) of the basis of $X_{k,m}$. Recall that in (\ref{basis}) we have required
\begin{equation}
\label{p_condition}
\frac{1}{p_2(k,m)}-\frac{1}{p_1(k,m)}= \ldots = \frac{1}{p_5(k,m)}- \frac{1}{p_4(k,m)}=: \eta(k,m).
\end{equation}

We start by picking a sequence $\{q(k,m)\}_{k,m}$ as follows. For a fixed $k\ge 1$ we choose $q(k,m) \longrightarrow p$ as $m\longrightarrow \infty$ such that
$q(k,m)>q(k, m+1)$ for all $m\ge 1$. Once $q(k,m)$ have been chosen for a fixed $k\ge 1$ and for all $m\ge 1$, the inductive step of picking $\{q(k+1,m)\}_{m\ge 1}$ is done in such a way to satisfy
$$
q(k,m+1)> q(k+1,m)> q(1,m+k+1)
$$
for all $m\ge 1$.

It is not hard to see that, once we  choose $\{q(k, m)\}_{k\ge 1, m\ge 1}$ in this way, we can look at it as a decreasing sequence if on the set of double indices $(k,m)$ we consider the order relation "$\le$" which follows the order: $(1,1), \ (1,2), (2,1),$

\noindent $(1,3), (2,2), (3,1), \ \ldots\ $, $\ (1,i), (2, i-1), \ldots, (i-1,2), (i,1),\  \ldots$.

Next, for $k\ge 1$ and $m\ge 1$, we pick $p_1(k,m)> p_2(k,m) > \ldots > p_5(k,m)$ satisfying (\ref{p_condition}) together with
\begin{equation}
\label{main_p_condition1}
q(k,m)> p_1(k,m) >\ldots > p_5(k,m) > q(k_0,m_0)
\end{equation}
where $(k_0,m_0)$ is the successor of $(k,m)$ with respect to "$\le$", and also
\begin{equation}
\label{main_p_condition2}
\frac{(m+k-1)(m+k)}2 \eta(k,m) < \frac 1{p_1(k,m)} -  \frac1{q(k,m)}.
\end{equation}
Lastly, we define $N(k,m)$ as
$$
N(k,m)=\left[(k+1)^{1/\eta(k,m)}\right]
$$
for $k\ge 1$, $m\ge 1$, which gives the estimate
\begin{equation}
\label{N_condition}
k < N(k,m)^{\eta(k,m)} \le  k+1.
\end{equation}


\noindent {\it Proof of Theorem \ref{complex_result}.} \ When $\alpha=(\alpha_k)_k \in (2^{\omega})^{\omega}, \beta=(\beta_k)_k \in (2^{\omega})^{\omega}$ satisfy $\alpha E_1 \beta$ we can see that the spaces $X(\alpha)$ and $X(\beta)$ are isomorphic by means of a linear operator $T: X(\alpha) \longrightarrow X(\beta)$ which is defined as follows:
\begin{itemize}

\item when $\alpha_k(m)=\beta_k(m)$, then $T_{\,|Y_{k,m}}= Id_{\,|Y_{k,m}}$

\item when $\alpha_k(m)=0$ and $\beta_k(m)=1$, then on $X_{k,m}=\overline {\rm span}~[Z_t]_{t\ge 1}$
\begin{equation}
\label{isom1}
T\left(\sum_{t \ge 1} (a_t x_t+ b_t y_t)\right)= \sum_{t \ge 1} (a_t \odot x_t - b_t \odot y_t)
\end{equation}
for all scalars $\{a_t\}_{t \ge 1}$, $\{b_t\}_{t\ge 1}$.

\item when $\alpha_k(m)=1$ and $\beta_k(m)=0$, then on ${\overline X}_{k,m}=\overline {\rm span}~[{\overline Z}_t]_{t\ge 1}$
\begin{equation}
\label{isom2}
T\left(\sum_{t \ge 1} (a_t \odot x_t+ b_t \odot y_t)\right)= \sum_{t \ge 1} (a_t x_t - b_t y_t)
\end{equation}
for all scalars $\{a_t\}_{t \ge 1}$, $\{b_t\}_{t\ge 1}$.

\end{itemize}

Recall that multiplication by scalars in ${\overline X}_{k,m}$ is given by $\lambda \odot x=\overline{\lambda}x$, for $\lambda \in {\bf C}$, $x\in X_{k,m}$. Taking into account the definition (\ref{basis}) of the basis of $X_{k,m}$ and
${\overline X}_{k,m}$ we can rewrite (\ref{isom1}) as
$$
T\left( \sum_{t\ge 1} a_t f_{1,t} + b_t f_{2,t} + \gamma_1 a_t f_{3,t} + \gamma_2 (a_t+ b_t) f_{4,t} + \gamma_3 (a_t+ i b_t) f_{5,t}\right)=
$$
$$
=\sum_{t\ge 1} {\overline a}_t f_{1,t} - {\overline b}_t f_{2,t} + \gamma_1 {\overline a}_t f_{3,t} + \gamma_2 ({\overline a}_t- {\overline b}_t) f_{4,t} +
\gamma_3 ({\overline a}_t - i {\overline b}_t) f_{5,t}.
$$
A simple computation shows that, in this situation, we have
$$
\|T_{\,|X_{k,m}}\| \le 2 \gamma_2 \gamma_3^{-1}= 2 N(k,m)^{\,16 \eta(k.m)}\le 2(k+1)^{16}.
$$
In the case when we are dealing with (\ref{isom2}) we also get $\|T_{\,|{\overline X}_{k,m}}\| \le 2(k+1)^{16}$. Thus we can conclude that
$$
\|T\| \le 2 \left(1+ \max \{k \ |\ \alpha_k \neq \beta_k\}\right)^{16}.
$$

Now let $\alpha=(\alpha_k)_k \in (2^{\omega})^{\omega}$, with $\alpha_k=(\alpha_k(m))_m \in 2^{\omega}$, and $\beta=(\beta_k)_k \in (2^{\omega})^{\omega}$, with $\beta_k=(\beta_k(m))_m \in 2^{\omega}$, be elements in $(2^{\omega})^{\omega}$ which are not $E_1$-equivalent. Without loss of generality assume that
$$
A= \{k \ |\ \exists \, m \ {\rm such~that~} \alpha_k(m)=0 \ {\rm and}\  \beta_k(m)=1\}
$$
is infinite.

Suppose that $T: X(\alpha) \longrightarrow X(\beta)$ is an isomorphism with $\| T\| \le 1/4$ and $\|T^{-1}\|=: C$. For $k\ge 1$, $m\ge 1$, denote by $P_{k,m}: X(\beta) \longrightarrow Y_{k, m}$ the canonical projection of $X(\beta)$ onto
$$
Y_{k,m}=
\left\{
{\renewcommand{\arraystretch}{1.4}
\begin{array}{l}
X_{k,m}, \ \ {\rm when}\ \ \beta_k(m)=0\\
{\overline X_{k, m}}, \ \ {\rm when}\ \ \beta_k(m)=1.
\end{array}}
\right.
$$
Furthermore, since $Y_{k,m} \subset \ell_{p_1(k,m)}\oplus_p\ldots \oplus_p \ell_{p_5(k,m)}$, we will denote by $$Q_j(k,m): Y_{k,m} \longrightarrow \ell_{p_j(k,m)}$$ the canonical projection for all $j=1, \ldots, 5$.

Let $k\in A$ be arbitrarily fixed and pick $m\ge 1$ such that $\alpha_k(m)=0$ and $\beta_k(m)=1$. We will concentrate our attention on the action of the isomorphism $T$ on $X_{k,m}=\overline {\rm span}~[Z_t]_{t\ge 1}$.

First, we notice that for $(k', m') > (k,m)$ we have the following: for every $\delta >0$ and infinite set $L\subseteq {\bf N}$, there exists an infinite subset $L' \subseteq L$ such that
$$
 \|P_{k',m'}T _{\,|\, \overline {\rm span}[Z_t]_{t\in L'}}\| \le \delta.
$$
Otherwise we can find $\delta_0>0$ and a normalized block sequence $(z_s)_s \subset X_{k,m}$ (with respect to the UFDD $\{Z_t\}_t$) satisfying
\begin{equation}
\label{p_1-estimate}
\delta_0< \|P_{k',m'}Tz_s\|~\left(=\left(\|Q_1(k',m') P_{k',m'}Tz_s\|^p+\ldots+\|Q_5(k',m') P_{k',m'}Tz_s\|^p\right)^{\frac1p}\right).
\end{equation}
By passing to a subsequence and perturbing the operator $P_{k',m'}T$ (similarly as in the remark following Proposition \ref{pr-one}) we may assume that $(P_{k',m'}Tz_s)_s$
are successive in $Y_{k',m'}$, with respect to the 2-dimensional UFDD in $Y_{k', m'}$. This  ensures that $(P_{k',m'}Tz_s)_s$ admit a lower $p_1(k',m')$-estimate,
based on (\ref{p_1-estimate}). On the other hand, $(z_s)_s$ admit an upper $p_5(k,m)$-estimate, and this gives a contradiction since $p_5(k,m) >q(k',m') >p_1(k',m')$.

Inductively, for every $(\delta_{k',m'})_{(k',m')> (k,m)} \searrow 0$ we can get infinite sets $\{L_{k',m'}\}_{(k',m') >(k,m)}$ with the property that
$$
L_{k',m'} \supseteq L_{k'',m''},  \qquad {\rm whenever~} (k',m')\le (k'',m''),
$$
and
$$
\|P_{k',m'}T\,_{|\ \overline {\rm span} [Z_t]_{t\in L_{k',m'}}}\| \le \delta_{k',m'},\quad {\rm
for\ all\ } (k',m')>(k,m).
$$
Let $I=\{t_{k',m'}\}_{(k',m') >(k,m)}$ be the diagonal sequence of $\{L_{k',m'}\}_{(k',m') >(k,m)}$. We then obtain a subspace of $X_{k,m}$, namely
$S_{k,m} :=\overline {\rm span}[Z_t]_{t\in I}$, and by a perturbation argument we can get a linear operator
$T_0:S_{k,m} \longrightarrow X(\beta)$ which satisfies
$$
\frac1{2C} \|x\|\leq \|T_0x\|\leq \frac12 \|x\|,\quad {\rm for\ all\ } x\in S_{k,m},
$$
and
$$
P_{k',m'}T_0z=0,
$$
for all $z\in \overline {\rm span} [Z_t]_{t\in I, \,t\ge t_{k',m'}}$ and all $(k',m')>(k,m)$.

Denote by $R_{k,m}: X(\beta)\longrightarrow \left( \sum_{(k',m')>(k,m)} \oplus \, Y_{k',m'}\right)_{\ell_p}$ the cannonical projection.

It is easy to see now that for all $\delta >0$ and every infinite set $L\subseteq I$ there exists $t\in L$ satisfying
$$
\|R_{k,m}{T_0}_{\,|\,Z_t}\|\leq \delta.
$$
Otherwise, we can find $\delta_0 >0$, an infinite set $L_0\subseteq I$ and, for each $t\in L_0$, normalized elements  $z_t\in Z_t$  such that
$\|R_{k,m}T_0z_t\|>\delta_0$. By passing to a subsequence and perturbing the operator $R_{k,m}T_0$ we may assume that
$( R_{k,m}T_0z_t )_{t\in L_0}$ are disjoint in $\left( \sum_{(k',m')>(k,m)} \oplus \, Y_{k',m'}\right)_{\ell_p}$
and thus they admit a lower $p$-estimate. However $(z_t)_{t\in L_0}$ admit an upper $p_5(k,m)$-estimate,
and this gives a contradiction since $p_5(k,m)>p$.

This allows us to obtain a subsequence $\tilde I$ of $I$ and, after some
perturbations, a linear operator (denoted again by) $T_0:\overline {\rm span} [Z_t]_{t\in \tilde
I} \longrightarrow X(\beta)$ with the property that $R_{k,m}T_0=0$ and
$$
\frac1{4C} \|x\|\leq \|T_0x\|\leq \|x\|, \quad \mbox{\rm for\ all\ }
x\in \overline {\rm span} [Z_t]_{t\in \tilde I}.
$$
In addition, we can also assume that $T_0$ has the property that $P_{k',m'}T_0: \overline {\rm span} [Z_t]_{t\in \tilde I} \longrightarrow
Y_{k',m'}$ is block-diagonal, with respect to their respective 2-dimensional decompositions, for all $(k',m')\le (k,m)$.

Looking now at $P_{k,m}T_0:\overline {\rm span}[Z_t]_{t\in \tilde I} \longrightarrow
Y_{k,m}= {\overline X_{k, m}}$ we have all the conditions of Proposition $\ref{pr-one}$ satisfied.
We can then find $I_0 \subset \tilde I$, $|I_0|=N(k,m)$ with the property that, for $y=\sum_{k\in I_0} y_k$,
$$
\|P_{k,m}T_0y\|\le  70 N(k,m)^{-\eta(k,m)} N(k,m)^{\frac1{p_2(k,m)}}.
$$
and thus
$$
\|\sum_{(k',m') < (k,m)} P_{k',m'} T_0y\|\ge \|T_0y\|-\|P_{k,m}T_0y\|
$$
\begin{equation}
\label{operator_condition}
\ge \left(\frac1{4C}- 70N(k,m)^{-\eta(k,m)}\right)
N(k,m)^{\frac1{p_2(k,m)}}.
\end{equation}
For every $(k',m') < (k,m)$ we have
\begin{eqnarray*}
\|P_{k',m'}T_0y\|&=&\|\sum_{k\in I_0} P_{k',m'}T_0y_k\| \\
&\le& \|\sum_{k\in I_0} Q_1(k',m')P_{k',m'}T_0y_k\|+\ldots+\|\sum_{k\in I_0} Q_5(k',m')P_{k',m'}T_0y_k\|\\
&\le & 2 {N(k,m)}^{\frac1{p_1(k',m')}}+\ldots+2 {N(k,m)}^{\frac1{p_5(k',m')}}
\le 10{N(k,m)}^{\frac1{q(k,m)}}.
\end{eqnarray*}
The last inequalities are consequences of (\ref{main_p_condition1}) and the fact that $P_{k',m'}T_0$ is block-diagonal and
$$
\|Q_j(k',m')P_{k',m'}T_0y_k\|\le \|y_k\|\le2,\ \ \forall k\in I_0,\ \ \forall
j=1, \ldots, 5.
$$

Since there are at most $(m+k-1)(m+k)/2$ elements $(k',m')< (k,m)$ we get, as a consequence of (\ref{main_p_condition2}) and (\ref{N_condition}),
$$
\|\sum_{(k',m') < (k,m)} P_{k',m'} T_0y\|\le 10\frac{(m+k-1)(m+k)}2 {N(k,m)}^{\frac1{q(k,m)}}
$$
$$
\le 10k^{\frac{(m+k-1)(m+k)}2} {N(k,m)}^{\frac1{q(k,m)}} <10{N(k,m})^{\frac{(m+k-1)(m+k)}2 \eta(k,m)+\frac1{q(k,m)}}
$$
$$
\le 10 {N(k,m})^{\frac1{p_1(k,m)}}. \hskip8.3cm
$$

We now conclude based on (\ref{operator_condition}) that
$$
10{N(k,m)}^{\frac1{p_1(k,m)}-\frac1{p_2(k,m)}}\ge \frac1{4C}-70{N(k,m)}^{-\eta(k,m)}
$$
which in turn gives $C\ge {N(k,m)}^{\eta(k,m)}/320> k/320$ (by (\ref{p_condition}) and (\ref{N_condition})).

As $k\in A$ was arbitrarily fixed, we obtain a contradiction and this concludes the proof.
\qed

\vskip.3cm

\address

\end{document}